\newtheorem{theorem}{Theorem}
\newtheorem{corollary}[theorem]{Corollary}
\newtheorem{proposition}[theorem]{Proposition}
\theoremstyle{definition}
\theoremstyle{remark}
\newtheorem{remark}[theorem]{Remark}
\newcommand{\PSL}{\mathrm{PSL}}
\newcommand{\abs}[1]{\left\vert#1\right\vert}
\DeclareMathOperator{\sgn}{sign} %
\newcommand{\id}{\mathrm{id}}
\newcommand{\Rot}{\mathrm{Rot}}
\newcommand{\Diff}{\mathrm{Diff}}
\begin{document}

\title{Global existence and blowup for geodesics in universal Teichm\"uller spaces}%

\author[S. C. Preston]{Stephen C. Preston}
\address{Department of Mathematics, Brooklyn College, Brooklyn, NY 11210, USA}
\email{stephen.preston@brooklyn.cuny.edu}

\author[P. Washabaugh]{Pearce Washabaugh}
\address{Department of Mathematics, University of Colorado, Boulder, CO 80309-0395, USA}
\email{pearce.washabaugh@colorado.edu}

\thanks{S. C. Preston gratefully acknowledges support from Simons Collaboration Grant no. 318969, and the hospitality of the organizers of the ``Math on the Rocks'' workshop in Grundsund, Sweden in July 2015, where part of this research was conducted.}%
\subjclass[2010]{35Q35, 53D25}%
\keywords{Euler-Weil-Petersson equation, global existence, universal Teichm\"uller space, Sobolev metrics of fractional order.}%

\date{\today}%

\begin{abstract}
In this paper we prove that all initially-smooth solutions of the Euler-Weil-Petersson equation, which describes geodesics on the universal Teichm\"uller space under the Weil-Petersson metric, will remain smooth for all time. This extends the work of Escher-Kolev for strong Riemannian metrics to the borderline case of $H^{3/2}$ metrics. In addition we show that all initially-smooth solutions of the Wunsch equation, which describes geodesics on the universal Teichm\"uller curve under the Velling-Kirillov metric, must blow up in finite time due to wave breaking, extending work of Castro-C\'ordoba and Bauer-Kolev-Preston. Finally we illustrate these phenomena in terms of conformal maps of the unit disc, using the conformal welding representation of circle diffeomorphisms which is natural in Teichm\"uller theory.
\end{abstract}

\maketitle

\tableofcontents

\section{Introduction}

In this paper we are interested in smooth solutions to the one-dimensional Euler-Arnold equation
\begin{equation}\label{eulerarnold}
m_t + um_{\theta} + 2m u_{\theta} = 0, \qquad m = \Lambda u, \qquad u = u(t,\theta), \qquad u(0) = u_0\in C^{\infty}(S^1),
\end{equation}
where $\Lambda$ is a symmetric, positive semi-definite nonlocal differential operator and $S^1=\mathbb{R}/2\pi \mathbb{Z}$.
The particular special cases we are interested in are
\begin{itemize}
\item the Euler-Weil-Petersson equation~\cite{GBR2015}: $\Lambda = -H(u_{\theta\theta\theta}+u_{\theta})$ or $\Lambda(e^{in\theta}) = \lvert n\rvert (n^2-1) e^{in\theta}$,
\item and the Wunsch equation~\cite{Wun2010},\cite{BKP2015}: $\Lambda = Hu_{\theta}$ or $\Lambda(e^{in\theta}) = \lvert n\rvert e^{in\theta}$,
\end{itemize}
where $H$ is the Hilbert transform defined by $H(e^{in\theta}) = -i(\sgn{n}) e^{in\theta}$.
When paired with the flow equation
\begin{equation}\label{flow}
\frac{\partial\eta}{\partial t}(t,\theta) = u\big(t, \eta(t,\theta)\big), \qquad \eta(0,\theta)=\theta,
\end{equation}
the Euler-Arnold equation \eqref{eulerarnold} describes geodesics $\eta(t)$ of the right-invariant
Riemannian metric defined at the identity element by
$\langle u, u\rangle = \int_{S^1} u\Lambda u\, d\theta$ on a homogeneous space
$\Diff(S^1)/G$. Here $G$ is the group generated by the subalgebra $\ker{\Lambda}$ of length-zero directions: for the Euler-Weil-Petersson
equation we have $G = \PSL_2(\mathbb{R})$, and for the Wunsch equation we have $G = \Rot(S^1) \cong S^1$. See Arnold-Khesin~\cite{AK1998} for the general
setup (which includes also the Euler equations of ideal fluid mechanics), Khesin-Misio{\l}ek~\cite{KM2003} for the setup for homogeneous spaces,
and Escher-Kolev~\cite{EK2014a} for these two equations in particular.

The local existence result is that if $u_0\in H^s(S^1)/\mathfrak{g}$ for $s>\tfrac{3}{2}$ (where $\mathfrak{g}$ is the Lie algebra of $G$), then there is a unique solution $u\in C([0,T), H^s(S^1)/\mathfrak{g})$ for some
$T>0$ (which may be infinite). In our context this is a consequence of the fact that the geodesic equation is smooth, so that there is a unique solution $\eta\in C^{\infty}([0,T), \Diff^s(S^1)/G)$ with $\eta(0)=\id$ and $\dot{\eta}(0)=u_0$. Loss of smoothness of $u$ in time occurs due to the fact that composition required to get $u=\dot{\eta}\circ\eta^{-1}$ is not smooth. This approach to the Euler equations was originally due to Ebin-Marsden~\cite{EM1970}; for the Wunsch equation it was proved by Escher-Kolev-Wunsch~\cite{EKW2012}, while for the Euler-Weil-Petersson equation it was proved by Escher-Kolev~\cite{EK2014a}. Castro-C\'ordoba~\cite{CC2010} showed that if $u_0$ is initially odd, then solutions to the Wunsch equation blow up in finite time; the authors of \cite{BKP2015} generalized this to show that if there exists any $\theta_0\in S^1$ with $u_0'(\theta_0)<0$ and $Hu_0'(\theta_0)=0$, then the solution of the Wunsch equation blows up in finite time. For the Euler-Weil-Petersson equation, it was not known whether initially smooth data would remain smooth for all time. However Gay-Balmaz and Ratiu~\cite{GBR2015} interpreted the equation in $H^{3/2}$ as describing geodesics of a \emph{strong} Riemannian metric on a certain manifold and concluded that the velocity field $u$ remains in $H^{3/2}(S^1)$ for all time. This is almost but not quite enough to get a global $C^1$ bound, which would guarantee global existence of \emph{smooth} solutions along with a smooth Lagrangian flow $\eta$.

In general the Hilbert manifold topology on $\Diff^s(S^1)$ or related spaces (e.g., subgroups or quotients) is given in terms of the Sobolev space norms $H^s$ for $s>\frac{3}{2}$, since $u\in H^s(S^1)$ for $s>\frac{3}{2}$ ensures $u\in C^1(S^1)$. On the other hand the right-invariant Riemannian metric which gives the geometry and the geodesic equation is given in terms of the Sobolev $H^r$ norm for some $r\le s$. If $r>\frac{3}{2}$ then we can use $r=s$, and we have a strong Riemannian metric; global existence of geodesics in that case was proved by Escher-Kolev~\cite{EK2014b}. If $r\le 1$ then solutions may blow up; for example the $H^1$ metric on $\Diff(S^1)$ leads to the Camassa-Holm equation~\cite{Mis1998} which, for some initial data, has solutions that blow up in finite time~\cite{McK2003}. The degenerate $\dot{H}^1$ metric on $\Diff(S^1)/\Rot(S^1)$ leads to the Hunter-Saxton equation~\cite{KM2003}, for which all solutions blow up in finite time~\cite{Len2007}.

The main theorems of this paper settle the global existence question for the degenerate $\dot{H}^r$ metrics corresponding to $r=\frac{1}{2}$ (the Wunsch equation) and $r=\frac{3}{2}$ (the Euler-Weil-Petersson equation).

\begin{theorem}\label{mainthm1}
Suppose $s>\tfrac{3}{2}$ and $u_0$ is an $H^s$ velocity field on $S^1$ with mean zero (i.e., $u_0\in H^s(S^1)/\mathbb{R}$). Then the solution $u(t)$ of the Wunsch equation with $u(0)=u_0$ blows up in finite time.
\end{theorem}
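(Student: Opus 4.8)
The plan is to reduce Theorem~\ref{mainthm1} to the Bauer--Kolev--Preston blow-up criterion recalled in the introduction: the solution must break down in finite time as soon as there is a single point $\theta_0\in S^1$ with $u_0'(\theta_0)<0$ and $(Hu_0')(\theta_0)=0$. Since the constant field $u_0\equiv 0$ is the stationary geodesic, I assume $u_0$ is nonconstant, so that $g:=u_0'$ is a real, continuous (as $s>\tfrac32$ forces $u_0\in C^1$), $2\pi$-periodic, mean-zero function with $g\not\equiv 0$. The whole theorem then reduces to the purely function-theoretic claim that \emph{every such $g$ possesses a point $\theta_0$ at which $g(\theta_0)<0$ and $(Hg)(\theta_0)=0$}, after which the cited criterion applies verbatim.

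To prove this claim I would pass to the holomorphic extension. Expanding $g=\sum_{n\ne 0}c_n e^{in\theta}$ with $c_{-n}=\overline{c_n}$, one checks from $H(e^{in\theta})=-i\,\sgn(n)e^{in\theta}$ that
\[
F:=g+iHg=2\sum_{n>0}c_n e^{in\theta}
\]
is the boundary value of $\Phi(z):=2\sum_{n>0}c_n z^n$, which is holomorphic on the open unit disc and continuous up to $S^1$, with $\Phi(0)=0$ and $\Phi\not\equiv 0$. The two conditions $(Hg)(\theta_0)=0$ and $g(\theta_0)<0$ say precisely that $F(\theta_0)$ lies on the open negative real axis $(-\infty,0)$, so it suffices to show that the boundary loop $\theta\mapsto F(\theta)$ meets $(-\infty,0)$.

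This is where I would invoke a winding-number argument. Since $\Phi$ is holomorphic with a zero at the origin, for every radius $\rho<1$ missing the discrete interior zero set the loop $\theta\mapsto\Phi(\rho e^{i\theta})$ has winding number about $0$ equal to the number of enclosed zeros, hence at least one; consequently the normalized map $\Phi/\abs{\Phi}\colon\set{\abs{z}=\rho}\to S^1$ has degree $\ge 1$ and is therefore surjective, so it attains the value $-1$ and the loop crosses $(-\infty,0)$. In particular, whenever $\Phi$ has no boundary zeros the same reasoning applies directly at $\rho=1$ and yields a point $\theta_0$ with $F(\theta_0)\in(-\infty,0)$, giving the desired strict inequality $g(\theta_0)<0$ together with $(Hg)(\theta_0)=0$.

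The main obstacle is the remaining degenerate situation in which $\Phi$ does have zeros on $S^1$: letting $\rho\to 1$ and extracting a limit of the crossing points yields only $F(\theta_0)\in(-\infty,0]$, so a priori the limit could land on a common zero $F(\theta_0)=0$. Here I would exploit the rigidity of $H^p$ boundary values: because $\Phi\not\equiv 0$, the F.\ and M.\ Riesz theorem guarantees $\log\abs{F}\in L^1(S^1)$ and hence $F\ne 0$ almost everywhere, so the boundary zero set is a closed null set. If $g\ge 0$ held at \emph{every} zero of $Hg$, the loop $F$ would avoid the open negative axis entirely while still being forced to wind at least once around the origin on each interior circle $\abs{z}=\rho$; passing to the limit $\rho\to 1$ and using that the crossings occur off the null zero set then produces the contradiction, and thus a genuine point with $g(\theta_0)<0$ and $(Hg)(\theta_0)=0$. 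Establishing this limiting step carefully — ruling out that the full winding is ``carried'' by the boundary zeros — is the one place where real care is required; once it is in place, the Bauer--Kolev--Preston criterion forces $u$ to leave $H^s$ in finite time, completing the proof.
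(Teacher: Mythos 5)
Your reduction to the Bauer--Kolev--Preston criterion is exactly the paper's reduction, and your function-theoretic reformulation (the boundary value $F=u_0'+iHu_0'$ of the holomorphic extension $\Phi$, with $\Phi(0)=0$, must meet the open ray $(-\infty,0)$) is equivalent to the claim the paper proves. The gap is in the case you yourself flag: boundary zeros of $\Phi$. The mechanism you propose there --- the boundary zero set is closed and Lebesgue-null (a Hardy-space/Jensen fact, incidentally, not the F.~and~M.~Riesz theorem), so ``the crossings occur off the null zero set'' --- does not do the job, and in fact the geometry is the opposite of what you describe. Under the contradiction hypothesis that the boundary loop avoids $(-\infty,0)$, any interior crossing points $\rho_k e^{i\theta_k}$ with $\Phi(\rho_k e^{i\theta_k})\in(-\infty,0)$ and $\rho_k\to 1$ must satisfy $\Phi(\rho_k e^{i\theta_k})\to 0$: if a subsequence of the values stayed $\le -\epsilon$, continuity of $\Phi$ on $\overline{\mathbb{D}}$ would force $F(\theta_0)\in(-\infty,-\epsilon]$ at a limit point $\theta_0$, contradicting avoidance outright. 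So every limit point of the crossings \emph{is} a boundary zero; the crossings are funneled into the zero set, and that set having measure zero is no obstruction to this. Ruling out that the winding is ``carried'' by the boundary zeros is the entire content of the degenerate case, and your sketch leaves it unproven.

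The gap is fixable, and in a way that removes the case distinction altogether. Suppose the boundary values avoid $(-\infty,0)$. For each $\epsilon>0$, the boundary values of $\Phi+\epsilon$ then avoid the closed ray $(-\infty,0]$, so the boundary loop lies in the slit plane $\mathbb{C}\setminus(-\infty,0]$, which is simply connected and omits the origin; hence that loop has winding number $0$ about $0$, and by the argument principle (valid for functions holomorphic in $\mathbb{D}$, continuous on $\overline{\mathbb{D}}$, and nonvanishing on $S^1$) the function $\Phi+\epsilon$ has no zeros in $\mathbb{D}$. Since $\epsilon>0$ was arbitrary, $\Phi$ omits all of $(-\infty,0)$ in the open disc; but $\Phi$ is nonconstant with $\Phi(0)=0$, so the open mapping theorem makes $\Phi(\mathbb{D})$ a neighborhood of $0$, which must contain points $-\epsilon$ --- a contradiction. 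With that substitution your argument closes and becomes a genuinely different proof from the paper's: the paper never looks at winding numbers, but instead takes the harmonic extension of $u_0$ and its conjugate $q$ (extending $Hu_0$), locates the boundary maximum of $q$ to get $Hu_0'(\theta_0)=0$, and applies the Hopf lemma together with the Cauchy--Riemann equations to get $u_0'(\theta_0)<0$. The paper's route needs a little boundary regularity for the Hopf lemma (harmless since $s>\tfrac32$), while the corrected topological route needs only continuity up to the boundary; both are short once the right tool is in hand.
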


\begin{theorem}\label{mainthm2}
Suppose $s>\tfrac{3}{2}$ and $u_0$ is an $H^s$ velocity field on $S^1$, and that the Fourier series of $u_0$ has vanishing $n=0$, $n=1$, and $n=-1$ component; i.e., $u_0\in H^s(S^1)/\mathfrak{sl}_2(\mathbb{R})$. Then the solution $u(t)$ of the Euler-Weil-Petersson equation with $u(0)=u_0$ remains in $H^s$ for all time. In particular if $u_0$ is $C^{\infty}$ then so is $u(t)$ for all $t>0$.
\end{theorem}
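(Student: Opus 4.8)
The plan is to promote the conserved borderline energy to a genuine $C^1$ bound by means of a logarithmic interpolation inequality, and then to invoke the continuation criterion of the local theory.

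First I would recall from Escher--Kolev~\cite{EK2014a} (following the Ebin--Marsden scheme described above) that for $u_0\in H^s(S^1)/\mathfrak{sl}_2(\mathbb{R})$, $s>\tfrac32$, the solution exists on a maximal interval $[0,T_{\max})$ and is global provided $\norm{u}_{C^1}$ stays bounded on finite subintervals; equivalently, provided the Jacobian $\eta_\theta$ of the flow \eqref{flow} stays bounded above and away from zero, since $\partial_t\log\eta_\theta=u_\theta\circ\eta$ gives $\abs{\log\eta_\theta(t,\theta)}\le\int_0^t\norm{u_\theta(\tau)}_{L^\infty}\,d\tau$. Because \eqref{eulerarnold} is the geodesic equation of a right-invariant metric, the energy $\langle u,u\rangle=\int_{S^1}u\,\Lambda u\,d\theta$ is conserved and is equivalent, on the quotient where the $n=0,\pm1$ modes vanish, to $\norm{u}_{\dot{H}^{3/2}}^2$; with Poincar\'e's inequality this controls $\norm{u}_{H^{3/2}}$ for all time. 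This is exactly the Gay-Balmaz--Ratiu bound~\cite{GBR2015}. The obstruction is that $H^{3/2}(S^1)$ is the borderline space that just fails to embed into $C^1(S^1)$, so energy conservation alone does not close the criterion — this is what the strong-metric argument of~\cite{EK2014b} (valid for $r>\tfrac32$, where $\dot H^{r}\hookrightarrow C^1$) cannot reach.

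The device I would use to cross the borderline is the logarithmic (Brezis--Gallouet--Wainger) inequality, applied to $u_\theta$ at the critical exponent $\tfrac12$:
\[
\norm{u_\theta}_{L^\infty}\le C\Big(1+\norm{u}_{\dot H^{3/2}}\,\sqrt{\log\big(e+\norm{u}_{H^s}\big)}\Big).
\]
Since the prefactor $\norm{u}_{\dot H^{3/2}}$ is conserved, this bounds $\norm{u}_{C^1}$ by $\sqrt{\log}$ of the super-critical norm $\norm{u}_{H^s}$, which must then be propagated in time. For this I would derive the energy estimate
\[
\frac{d}{dt}\norm{u}_{H^s}^2\le C_s\,\norm{u}_{C^1}\,\norm{u}_{H^s}^2,
\]
differentiating \eqref{eulerarnold}, treating $m=\Lambda u$ as solving the transport equation $m_t+um_\theta=-2mu_\theta$, and controlling the commutators. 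Combining the two displays and setting $y=e+\norm{u}_{H^s}^2$ gives $\dot y\lesssim y\sqrt{\log y}$, whence $\sqrt{\log y(t)}\le\sqrt{\log y(0)}+Ct$, so $\norm{u}_{H^s}$ grows at most like $\exp(Ct^2)$ and in particular remains finite on every finite interval. This forbids $T_{\max}<\infty$, giving global existence in $H^s$; running the argument for every $s>\tfrac32$ and using uniqueness shows that $u_0\in C^\infty=\bigcap_s H^s$ yields $u(t)\in\bigcap_s H^s=C^\infty$ for all $t$.

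The main obstacle is the energy estimate itself: because $\Lambda$ has \emph{order three}, the commutator $[\Lambda,u]$ has order two and naively involves up to three derivatives of $u$, so a careless bound produces a coefficient like $\norm{u}_{C^3}$ rather than $\norm{u}_{C^1}$ — and a $C^3$ prefactor could not be absorbed by the $\sqrt{\log}$ term, since only the $\dot H^{3/2}$ norm is conserved. Closing the estimate with a coefficient controlled by $\norm{u}_{C^1}$ alone requires exploiting the Hamiltonian structure of the geodesic flow, in particular the pointwise conservation law $m(t,\eta(t,\theta))\,\eta_\theta(t,\theta)^2=m_0(\theta)$ that follows from \eqref{eulerarnold}--\eqref{flow}, to extract the decisive cancellations in the top-order terms. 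It is precisely the order-three (equivalently $r=\tfrac32$) structure that makes the conserved energy land exactly in the borderline space of the logarithmic inequality; the same scheme fails for the order-one ($r=\tfrac12$) Wunsch operator, consistent with the blowup of Theorem~\ref{mainthm1}.
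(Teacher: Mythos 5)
Your route is genuinely different from the paper's, so let me first record the contrast. The paper never differentiates an $H^s$ energy at all: it rewrites the equation in Lagrangian form (Proposition \ref{EWPrewrite}) as $\partial_t\big(u_\theta\circ\eta\big) = -F\circ\eta + G\circ\eta$, and then proves \emph{pointwise} bounds for the two forcing terms by the conserved critical energy --- $\lVert F\rVert_{L^\infty}\le \tfrac{1}{\pi}\lVert u\rVert^2_{\dot H^{3/2}}+\tfrac{\pi}{2}\lVert u\rVert^2_{\dot H^{1}}$ via the holomorphic sum-of-squares representation of $F$ plus Hardy's inequality (Theorem \ref{mysteryboundthm}), and $\lVert G\rVert_{L^\infty}\lesssim \lVert u\rVert^2_{\dot H^{1/2}}+\lVert u\rVert^2_{\dot H^{3/2}}$ via Hilbert's double series inequality (Corollary \ref{Gboundcor}). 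Since on the quotient all these seminorms are dominated by the conserved $E_0$, this yields $\lVert u_\theta(t)\rVert_{L^\infty}\le \lVert u_0'\rVert_{L^\infty}+CE_0t$ outright, and only then is the Escher--Kolev machinery invoked, as the black-box continuation criterion of Theorem \ref{C1boundthm}. Your plan inverts the order: recover the $C^1$ norm from the $H^s$ norm by a Brezis--Gallouet--Wainger inequality and propagate the $H^s$ norm by Gronwall. The bootstrap logic itself ($\dot y\lesssim y\sqrt{\log y}$, hence $\lVert u\rVert_{H^s}\le e^{Ct^2}$ and then $\lVert u_\theta\rVert_{L^\infty}\lesssim 1+t$) is sound, and your structural remark --- that the conserved energy lands exactly in the critical space for the logarithmic inequality precisely when $r=\tfrac32$, and that the scheme cannot even start for Wunsch --- is correct and is the same borderline phenomenon the paper exploits.

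However, as written there is a genuine gap, and it sits exactly where you flag it: the inequality $\frac{d}{dt}\lVert u\rVert^2_{H^s}\le C_s\lVert u\rVert_{C^1}\lVert u\rVert^2_{H^s}$ is the entire difficulty, and your proposal asserts it rather than proves it. Standard Kato--Ponce commutator estimates applied to the transport form of the momentum equation leave terms of the type $\lVert u\rVert_{H^{s-3}}\lVert m_\theta\rVert_{L^\infty}$ and $\lVert u\rVert_{H^{s-2}}\lVert m\rVert_{L^\infty}$ with $m\sim u_{\theta\theta\theta}$; these are not controlled by $\lVert u\rVert_{C^1}$, and interpolating them against $\lVert u\rVert_{H^s}$ alone yields a power strictly larger than $2$ on the right-hand side, which destroys the Gronwall argument. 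The mechanism you propose for the rescue --- the conservation law $m(t,\eta)\,\eta_\theta^2=m_0$ --- cannot supply the missing cancellation, because it is equivalent to the transport equation you started from; invoking it is circular, not an extra ingredient. There are two honest repairs. (i) Cite it: the needed inequality is precisely ``the general estimate for Sobolev $H^q$ norms in terms of $C^1$ norms'' of Theorem 5.1 of \cite{EK2014b}, the very result the paper packages inside Theorem \ref{C1boundthm}; but you must use it in quantitative differential form (any prefactor at most quadratic in $\lVert u\rVert_{C^1}$ still closes your ODE), not merely as the qualitative continuation criterion. (ii) Prove a weaker, log-corrected version yourself: interpolate the offending factors against the conserved quantity $E_0\approx\lVert m\rVert^2_{H^{-3/2}}$ rather than against $C^1$; because $\tfrac32$ is exactly critical, the interpolation exponents sum to exactly $1$, at the cost of a factor $\sqrt{\log\big(e+\lVert u\rVert_{H^s}\big)}$ --- which your bootstrap absorbs anyway. (A further technical point: for $\tfrac32<s<3$ your commutators act at the negative order $s-3$ and need separate treatment, whereas the paper's argument is uniform in $s>\tfrac32$ because it never differentiates the $H^s$ norm.) With either repair your argument becomes a legitimate alternative proof --- softer and more standard than the paper's, but leaning much harder on the quantitative form of the Escher--Kolev estimate, and yielding $e^{Ct^2}$ growth where the paper gets explicit constants; without a repair, the central estimate remains unproved.
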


Additionally, Theorem \ref{mainthm1} almost immediately gives us that every mean zero solution of the Constantin-Lax-Majda equation  \cite{CLM1985} blows up in finite time. Overall, these two Theorems mean that the case $r=\tfrac{3}{2}$ behaves the same as the cases for $r>\frac{3}{2}$, while the case $r=\frac{1}{2}$ behaves the same as for $r=1$. We may conjecture that there is a critical value $r_0$ such that for $r>r_0$ all smooth mean-zero solutions remain smooth for all time, while for $r<r_0$ all smooth mean-zero solutions blow up in finite time. Our guess is that $r_0=\frac{3}{2}$, but the current method does not prove this; furthermore we do not know what happens with geodesics for $\frac{1}{2}<r<1$ or $1<r<\frac{3}{2}$ even in the degenerate case.

Both equations arise naturally in the study of universal Teichm\"uller spaces. The Euler-Weil-Petersson equation was derived in \cite{GBR2015} as the Euler-Arnold equation arising from the Weil-Petersson metric on the universal Teichm\"uller space. This geometry has been studied extensively by Takhtajan-Teo~\cite{TT2006}; in particular they constructed the Hilbert manifold structure that makes Weil-Petersson a strong Hilbert metric (thus ensuring that geodesics exist globally). The Weil-Petersson geometry is well-known: the sectional curvature is strictly negative, and it is a K\"ahler manifold with almost complex structure given by the Hilbert transform. See Tromba~\cite{Tro1992} and Yamada~\cite{Yam2014} for further background on the Weil-Petersson metric on the universal Teichm\"uller space.

The Wunsch equation arises from the Riemannian metric $\langle u, u\rangle = \int_{S^1} u Hu_{\theta}\,d\theta$, which is called the Velling-Kirillov metric and was proposed as a metric on the universal Teichm\"uller curve by Teo~\cite{Teo2004}\cite{Teo2007}. The Velling-Kirillov geometry was originally studied by Kirillov-Yur'ev~\cite{KY1987}; although the sectional curvature is believed to be always positive, this is not yet proved. Furthermore the geometries are related in the sense that integrating the square of the symplectic form for the W-P geometry gives the symplectic form for the V-K geometry. Yet the properties of these geometries seem to be opposite in virtually every way: from Fredholmness of the exponential map~\cite{MP2010}\cite{BKP2015} to the sectional curvature to the global properties of geodesics mentioned above.

The authors would like to thank Martin Bauer and Boris Kolev for suggesting the problem and useful discussions on the result. 

\section{Proof of the Main Theorems}

\subsection{Rewriting the Equations and Proof of Theorem \ref{mainthm1}}

Let us first sketch the blowup argument for the Wunsch equation from \cite{BKP2015}.
The Wunsch equation is given for mean-zero vector fields $u$ on $S^1$ (identified with functions) by the formula
\begin{equation}\label{wunsch}
\omega_t + u\omega_{\theta} + 2u_{\theta} \omega = 0, \qquad \omega = Hu_{\theta}.
\end{equation}
In terms of the Lagrangian flow $\eta$ given by \eqref{flow}, we may rewrite this as
$$\partial_t \omega\big(t,\eta(t,\theta)\big) + 2 \eta_{t\theta}(t,\theta) \omega\big(t,\eta(t,\theta)\big)/\eta_{\theta}(t,\theta) = 0$$
which leads to the conservation law
\begin{equation}\label{conservationlaw}
\eta_{\theta}(t,\theta)^2 \omega\big(t,\eta(t,\theta)\big) = \omega_0(\theta).
\end{equation}
Applying the Hilbert transform to both sides of \eqref{wunsch} and using the following Hilbert transform identities (valid for mean-zero functions $f$):
\begin{equation}\label{hilbertidentities}
H(Hf)=-f \qquad \text{and}\qquad 2H(fHf) = (Hf)^2 - f^2,
\end{equation}
one obtains~\cite{BKP2015} an equation for $u_{\theta} = -H\omega$:
\begin{equation}\label{uthetawunsch}
u_{t\theta} + uu_{\theta\theta} + u_{\theta}^2 = -F + \omega^2
\end{equation}
where the function $F$ is a spatially nonlocal force given for each fixed time $t$ by
\begin{equation}\label{Fdef}
F = -u u_{\theta\theta} - H(uHu_{\theta\theta}).
\end{equation}
For a mean-zero function $u$, it was shown in \cite{BKP2015} that $F(t,\theta)>0$ for every $t$ and $\theta\in S^1$. We will give an
alternate proof of this fact in Theorem \ref{magicformulathm} below.

In Lagrangian form, using the conservation law  equation \eqref{uthetawunsch} becomes
\begin{equation}\label{wunschlagrangian}
\eta_{tt\theta}(t,\theta) = \frac{\omega_0(\theta)^2}{\eta_{\theta}(t,\theta)^3} - F\big(t,\eta(t,\theta)\big)\eta_{\theta}(t,\theta).
\end{equation}
It follows that if there is a point $\theta_0$ such that $u_0'(\theta_0)< 0$ and $\omega_0(\theta_0)=0$, then we will have
$\eta_{\theta}(0,\theta_0)=1$, $\eta_{t\theta}(0,\theta_0)< 0$, and $\eta_{tt\theta}(t,\theta_0)<0$ for all $t$, so that $\eta_{\theta}(t,\theta_0)$ must reach zero in finite time (which leads to $u_{\theta} \to -\infty$). Our proof that \emph{all} solutions blow up consists of showing that this condition happens for \emph{every} initial condition $u_0$ with $\omega_0=Hu_0$.

\begin{proof}[Proof of Theorem \ref{mainthm1}]
From the discussion above, the proof reduces to proving the following statement. Suppose $f\colon S^1\to \mathbb{R}$ is a smooth function with mean zero, and let $g = Hf$. Then there is a point $\theta_0\in S^1$ with $f'(\theta_0)< 0$ and $g'(\theta_0)=0$.

Let $p$ be the unique harmonic function in the unit disc $\mathbb{D}$ such that $p\vert_{S^1} = f$, and let $q$ be its harmonic conjugate normalized so that $q\vert_{S^1} = g$. Then in polar coordinates we have the Cauchy-Riemann equations
\begin{equation}\label{cauchyriemann}
r p_r(r,\theta) = q_{\theta}(r,\theta) \qquad \text{and}\qquad r q_r(r,\theta) = -p_{\theta}(r,\theta),
\end{equation}
and we have $p(1,\theta) = f(\theta)$ and $q(1,\theta) = g(\theta)$.

Since $q$ is harmonic, its maximum value within $\mathbb{D}$ occurs on the boundary $S^1$ at some point $\theta_0$. The maximum of $g$ occurs at the same point, so that $g'(\theta_0)=0$. By the Hopf lemma, we have $q_r(1,\theta_0) > 0$, so equations \eqref{cauchyriemann} imply that $f'(\theta_0) = p_{\theta}(1,\theta_0) < 0$.
\end{proof}

\begin{remark}
This argument also works when the domain is $\mathbb{R}$ and the functions have suitable decay conditions imposed. It can thus be applied to demonstrate that \emph{every} mean zero solution of the Constantin-Lax-Majda equation \cite{CLM1985}
$$\omega_t-v_x\omega = 0,\;\; v_x=H\omega$$
blows up in finite time, using the same argument as in that paper via the explicit solution formula.
\end{remark}

Now let us rewrite the Euler-Weil-Petersson equation to obtain the analogue of formula \eqref{uthetawunsch}. Recall from the introduction that it is given explicitly by
\begin{equation}\label{EWP}
\omega_t + u\omega_{\theta} + 2u_{\theta}\omega = 0, \qquad \omega = -Hu_{\theta\theta\theta} - Hu_{\theta}.
\end{equation}

\begin{proposition}\label{EWPrewrite}
The Euler-Weil-Petersson equation \eqref{EWP} is equivalent to the equation
\begin{equation}\label{EWPint}
u_{t\theta} = H(uHu_{\theta\theta}) + H(1+\partial_{\theta}^2)^{-1}\big[ 2u_{\theta} Hu_{\theta} - u_{\theta\theta} Hu_{\theta\theta}\big],
\end{equation}
In terms of the Lagrangian flow \eqref{flow}, equation \eqref{EWPint} takes the form
\begin{equation}\label{forcing}
\frac{\partial}{\partial t} u_{\theta}(t,\eta(t,\theta)) = -F(t,\eta(t,\theta)) + G(t,\eta(t,\theta))
\end{equation}
where $F$ is defined by formula \eqref{Fdef}
and $G$ is given by
\begin{equation}\label{Gdef}
G = H(1+\partial_{\theta}^2)^{-1}[2u_{\theta}Hu_{\theta} - u_{\theta\theta}Hu_{\theta\theta}].
\end{equation}
Here the operator $(1+\partial_{\theta}^2)$ is restricted to the orthogonal complement of the span of $\{1, \sin{\theta}, \cos{\theta}\}$ so as to be invertible.
\end{proposition}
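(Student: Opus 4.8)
The plan is to derive \eqref{EWPint} from \eqref{EWP} by applying the Hilbert transform $H$ to the vorticity equation and reducing everything to a single pointwise Leibniz identity, in exactly the spirit of the Wunsch derivation of \eqref{uthetawunsch}. The starting observation is that, writing $v = Hu$, the defining relation $\omega = -Hu_{\theta\theta\theta} - Hu_{\theta} = -H(1+\partial_{\theta}^2)u_{\theta}$ together with the identity $H^2 = -\mathrm{id}$ on mean-zero functions from \eqref{hilbertidentities} gives $H\omega = (1+\partial_{\theta}^2)u_{\theta}$. Since $u$ lies in the quotient by $\mathfrak{sl}_2(\mathbb{R})$, i.e.\ its $n=0,\pm1$ Fourier modes vanish, the operator $(1+\partial_{\theta}^2)$ is invertible on the relevant subspace, so differentiating in $t$ yields $u_{t\theta} = (1+\partial_{\theta}^2)^{-1}H\omega_t$.

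First I would apply $H$ to \eqref{EWP} and substitute $\omega_t = -u\omega_{\theta} - 2u_{\theta}\omega$, obtaining
\[
(1+\partial_{\theta}^2)u_{t\theta} = -H(u\omega_{\theta}) - 2H(u_{\theta}\omega).
\]
Next I would eliminate $\omega$ in favor of $v = Hu$ using $\omega = -(v_{\theta\theta\theta}+v_{\theta})$ and $\omega_{\theta} = -(v_{\theta\theta\theta\theta}+v_{\theta\theta})$, so that the right-hand side becomes $H\big[u(v_{\theta\theta\theta\theta}+v_{\theta\theta}) + 2u_{\theta}(v_{\theta\theta\theta}+v_{\theta})\big]$. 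The crux is then the pointwise identity
\[
u(v_{\theta\theta\theta\theta}+v_{\theta\theta}) + 2u_{\theta}(v_{\theta\theta\theta}+v_{\theta}) = (1+\partial_{\theta}^2)(uv_{\theta\theta}) + 2u_{\theta}v_{\theta} - u_{\theta\theta}v_{\theta\theta},
\]
which follows by expanding $(1+\partial_{\theta}^2)(uv_{\theta\theta})$ with the product rule and cancelling the two $u_{\theta\theta}v_{\theta\theta}$ terms; both sides reduce to $uv_{\theta\theta\theta\theta} + uv_{\theta\theta} + 2u_{\theta}v_{\theta\theta\theta} + 2u_{\theta}v_{\theta}$. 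Using that $H$ commutes with $(1+\partial_{\theta}^2)$, the right-hand side equals $(1+\partial_{\theta}^2)H(uHu_{\theta\theta}) + H\big[2u_{\theta}Hu_{\theta} - u_{\theta\theta}Hu_{\theta\theta}\big]$, and applying $(1+\partial_{\theta}^2)^{-1}$ produces \eqref{EWPint}.

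The main obstacle I anticipate is the careful bookkeeping of the low Fourier modes $n=0,\pm1$: both the inversion of $(1+\partial_{\theta}^2)$ and the removal of the outer $H$ are legitimate only modulo the span of $\{1,\sin\theta,\cos\theta\}$, and the product $uHu_{\theta\theta}$ need not lie in that orthogonal complement even though $u$ does. I would handle this by working throughout in the tangent space to $\Diff(S^1)/\PSL_2(\mathbb{R})$, where these three modes are projected out, and by verifying that the implicit projection in the term $H(uHu_{\theta\theta})$ is consistent with $u_{t\theta}$ having vanishing $n=0,\pm1$ modes. Because the identity above is an exact polynomial identity in $u,v$ and their derivatives, stripping the injective $H$ (up to a constant) and inverting $(1+\partial_{\theta}^2)$ on the complement are then justified.

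Finally, the Lagrangian form \eqref{forcing} follows from the chain rule applied to \eqref{flow}: since $\eta_t = u\circ\eta$ and mixed partials commute, $\partial_t\big[u_{\theta}(t,\eta)\big] = (u_{t\theta} + uu_{\theta\theta})\circ\eta$. Substituting \eqref{EWPint} and recognizing from \eqref{Fdef} that $-F = uu_{\theta\theta} + H(uHu_{\theta\theta})$, together with the definition \eqref{Gdef} of $G$, immediately yields $\partial_t\big[u_{\theta}(t,\eta)\big] = (-F + G)\circ\eta$, which is \eqref{forcing}.
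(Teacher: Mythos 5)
Your route is the same as the paper's: the pointwise identity you display is exactly what the paper invokes with the phrase ``using the product rule,'' and the rest --- applying $H$, commuting it past $(1+\partial_{\theta}^2)$, inverting that operator, and the chain-rule computation giving \eqref{forcing} --- matches the paper's proof step for step. The algebra in your identity is correct.

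The gap is at the inversion step, which is precisely the point the paper's proof singles out as the only thing needing verification. To apply $(1+\partial_{\theta}^2)^{-1}$ and obtain the term \eqref{Gdef} as written, you must show that $2u_{\theta}Hu_{\theta} - u_{\theta\theta}Hu_{\theta\theta}$ lies in the orthogonal complement of $\mathrm{span}\{1,\sin\theta,\cos\theta\}$. This is not a formal consequence of your identity: your closing assertion that ``because the identity above is an exact polynomial identity \dots\ inverting $(1+\partial_{\theta}^2)$ on the complement [is] justified'' is a non sequitur, and your alternative of ``working throughout in the tangent space, where these three modes are projected out'' would only produce $G = H(1+\partial_{\theta}^2)^{-1}P\big[2u_{\theta}Hu_{\theta} - u_{\theta\theta}Hu_{\theta\theta}\big]$ with $P$ the orthogonal projection onto that complement; to remove $P$ and recover \eqref{Gdef} you still need the same orthogonality. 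The missing ingredient is a Fourier-analytic fact about products of the form $fHf$: if $f$ is $2\pi$-periodic with mean zero, then the $n=0,\pm 1$ Fourier coefficients of $fHf$ vanish. The paper deduces this from the identities \eqref{hilbertidentities}; it can also be read off from the proof of Theorem \ref{Gboundthm}, where the coefficients $h_k$ of $f'Hf'$ are computed explicitly and $h_0=h_1=h_{-1}=0$. Once this lemma is supplied, your derivation closes. Incidentally, your worry about the other term is well founded: $uHu_{\theta\theta}$ can indeed have nonzero $n=\pm1$ modes (take $u=\cos 2\theta+\cos 3\theta$), so the first term of \eqref{EWPint} is properly read modulo $\mathfrak{sl}_2(\mathbb{R})$, i.e., in the quotient where the solution lives; but flagging that subtlety is not the same as resolving the one that the formula \eqref{Gdef} actually requires.
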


\begin{proof}
Equation \eqref{EWP} may be written
$$
-H(1+\partial_{\theta}^2)u_{t\theta} =  (1+\partial_{\theta}^2)(uHu_{\theta\theta}) - u_{\theta\theta}Hu_{\theta\theta} + 2u_{\theta}Hu_{\theta},
$$
using the product rule. We now solve for $u_{t\theta}$ by applying $H$ to both sides and inverting $(1+\partial_{\theta}^2)$.
To do this, we just need to check that the term $(2u_{\theta}Hu_{\theta} - u_{\theta\theta}Hu_{\theta\theta})$
is orthogonal to the subspace spanned by $\{1, \sin{\theta}, \cos{\theta}\}$. In fact this is true for every function $fHf$ when $f$ is $2\pi$-periodic with mean zero, since the formulas \eqref{hilbertidentities} imply both that $fHf$ has mean zero and that it has period $\pi$.


The only additional thing happening in equation \eqref{forcing} is the chain rule formula
$$ \partial_t u_{\theta}(t,\eta(t,\theta)) = u_{t\theta}(t,\eta(t,\theta)) + u_{\theta\theta}(t,\eta(t,\theta)) \eta_t(t,\theta) = (u_{t\theta} + uu_{\theta\theta})(t,\eta(t,\theta)).$$
\end{proof}


To prove Theorem \ref{mainthm2}, we want to show that $\lVert u_{\theta}\rVert_{L^{\infty}}$ remains bounded for all time, and by formula \eqref{forcing} it is sufficient to bound both $\lVert F\rVert_{L^{\infty}}$ and $\lVert G\rVert_{L^{\infty}}$. We will do this in the next two sections.

\subsection{The Bound on $F$}

In \cite{BKP2015}, it was shown that the function $F$ given by \eqref{Fdef} is positive for any mean-zero function $u\colon S^1\to \mathbb{R}$. This is essential for proving blowup for the Wunsch equation.

\begin{theorem}[Bauer-Kolev-Preston]\label{magicinequalityold}
Let $u\colon S^1\to\mathbb{R}$ be a function with Fourier series $u(\theta) = \sum_{n\in\mathbb{Z}} c_n e^{in\theta}$ with $c_0=0$.
If $F = -uu'' - H(uHu'')$, then
\begin{equation}\label{magic}
F(\theta) = 2\sum_{n=1}^{\infty} (2n-1) \lvert \phi_n(\theta)\rvert^2, \qquad \text{where } \phi_n(\theta) = \sum_{m=n}^{\infty} c_m e^{im\theta}.
 \end{equation}
In particular $F(\theta)>0$ for every $\theta$ if $u$ is not constant.
\end{theorem}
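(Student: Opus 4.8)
The plan is to verify the identity by computing the Fourier coefficients of both sides directly; the only nonelementary ingredient is the arithmetic identity $\sum_{n=1}^{M}(2n-1)=M^{2}$, which is exactly what converts the weights $(2n-1)$ on the right-hand side into the squared-frequency factors produced by $F$.

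First I would expand everything in Fourier modes. Writing $u=\sum_{j}c_{j}e^{ij\theta}$ with $c_{0}=0$ and $c_{-j}=\overline{c_{j}}$ (since $u$ is real), one has $u''=-\sum_{k}k^{2}c_{k}e^{ik\theta}$ and, from $H(e^{ik\theta})=-i\,\sgn(k)\,e^{ik\theta}$, also $Hu''=\sum_{k}i\,\sgn(k)\,k^{2}c_{k}e^{ik\theta}$. Forming the two products and applying $H$ once more to the second, the cross terms collect into
\[
F=-uu''-H(uHu'')=\sum_{j,k}k^{2}\bigl[1-\sgn(k)\,\sgn(j+k)\bigr]\,c_{j}c_{k}\,e^{i(j+k)\theta}.
\]
The bracket vanishes when $k$ and $j+k$ have the same sign and equals $2$ when they have opposite signs, so only the ``crossing'' pairs survive.

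Next I would expand the proposed right-hand side on its own. Writing $\lvert\phi_{n}\rvert^{2}=\phi_{n}\overline{\phi_{n}}$ and using $\overline{c_{m}}=c_{-m}$ gives $\lvert\phi_{n}(\theta)\rvert^{2}=\sum_{p\ge n,\,q\le -n}c_{p}c_{q}\,e^{i(p+q)\theta}$. Interchanging the order of summation, a fixed pair $(p,q)$ with $p\ge 1$ and $q\le -1$ enters the $n$-sum exactly for $1\le n\le\min(p,-q)$, so with $M=\min(p,-q)$ the identity $2\sum_{n=1}^{M}(2n-1)=2M^{2}$ yields
\[
2\sum_{n=1}^{\infty}(2n-1)\,\lvert\phi_{n}(\theta)\rvert^{2}=\sum_{p\ge 1,\,q\le -1}2\min(p,-q)^{2}\,c_{p}c_{q}\,e^{i(p+q)\theta}.
\]
It then remains to match the coefficient of $e^{iN\theta}$ in the two displays. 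I would argue on $\sgn(N)$: for $N>0$ the surviving terms of $F$ are those with $k<0$, where $j+k=N$ forces $\min(j,-k)=-k$ and hence $\min(j,-k)^{2}=k^{2}$, matching the weight $2k^{2}$; the case $N<0$ is the mirror image (now the positive index is the smaller one), and $N=0$ is the diagonal $q=-p$, reproducing the real nonnegative mean value $\sum_{p\ge1}2p^{2}\lvert c_{p}\rvert^{2}$ on both sides. Equivalently, one can symmetrize the asymmetric coefficient $k^{2}[1-\sgn(k)\sgn(j+k)]$ in $j$ and $k$ and check that it collapses to $\min(j,-k)^{2}$ on every opposite-sign pair, which handles all three cases simultaneously.

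I expect the only genuine obstacle to be the sign- and index-bookkeeping in this last step: the coefficient read off from $F$ carries $k^{2}$, not a symmetric weight, so one must be careful to see that summing over both orderings of an opposite-sign pair reconstitutes the symmetric factor $\min(p,-q)^{2}$. Once the identity is in hand, positivity is immediate: each summand $(2n-1)\lvert\phi_{n}\rvert^{2}$ is nonnegative, and if $F(\theta_{0})=0$ then every $\phi_{n}(\theta_{0})=0$; since $c_{n}e^{in\theta_{0}}=\phi_{n}(\theta_{0})-\phi_{n+1}(\theta_{0})$ this forces $c_{n}=0$ for all $n\ge1$, whence by reality $u\equiv0$. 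Therefore $F(\theta)>0$ at every $\theta$ whenever $u$ is nonconstant.
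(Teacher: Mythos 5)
Your proof is correct, but note that this paper never actually proves Theorem \ref{magicinequalityold}: it is quoted from \cite{BKP2015} with the remark that it ``is not difficult to prove using some simple index manipulations,'' and your argument is precisely that index manipulation, so it supplies the omitted proof rather than paralleling one given here. Your key steps all check out: the Fourier expansion of $F$ with kernel $k^2\left[1-\sgn(k)\sgn(j+k)\right]$, the interchange of summation converting the weights $(2n-1)$ into $\min(p,-q)^2$ via $\sum_{n=1}^{M}(2n-1)=M^2$, and the case match on $\sgn(N)$. The one imprecision is the blanket claim that ``only the crossing pairs survive'': when $j+k=0$ the bracket equals $1$, not $2$, because $\sgn(0)=0$; but your separate treatment of the $N=0$ diagonal (both orderings contribute $p^2$ each, totaling $2p^2=2\min(p,p)^2$) handles exactly this, so no gap results. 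For comparison with what the paper does prove: its Theorem \ref{magicformulathm} is a genuinely different route to positivity --- an integral representation of $F$ in terms of the difference quotient $\Upsilon$ --- but it is not an alternative proof of the identity \eqref{magic}, since its derivation takes \eqref{magic} as input. Your approach buys a self-contained, elementary verification of the identity; the paper's representation buys an explanation of why $F$ is positive and, more importantly for the paper, the quantitative bound of Theorem \ref{mysteryboundthm} used in the global existence proof.
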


In fact as \cite{BKP2015} show, this theorem generalizes to any number of derivatives: we have $H(uH\Lambda^pu) + u\Lambda^pu\ge 0$ for any $p\ge 0$, with Theorem \ref{magicinequalityold} representing the case $p=2$. Although the theorem is not difficult to prove using some simple index manipulations, it is rather mysterious why it works. The following more explicit formula for the function makes clear why $F$ is positive.

\begin{theorem}\label{magicformulathm}
Suppose $u\colon S^1\to \mathbb{R}$ has Fourier series $u(\theta) = \sum_{n\in\mathbb{Z}} c_n e^{in\theta}$ with $c_0=0$. Let $\Phi$ denote the holomorphic function in the unit disc $\mathbb{D}$ given by $\Phi(z) = \sum_{n=1}^{\infty} c_n z^n$, and set $\Upsilon(w,z) = \frac{\Phi(w)-\Phi(z)}{w-z}$ (which is holomorphic in $\mathbb{D}\times \mathbb{D}$). If $F = -uu'' - H(uHu'')$, then
\begin{equation}\label{Fdifferencequotient}
F(\theta) = \frac{1}{\pi} \int_0^{2\pi} \big\lvert \Upsilon(e^{i\theta}, e^{i\psi})\big\rvert^2 \, d\psi
+ \frac{4}{\pi} \iint_{\mathbb{D}} \Big\lvert \frac{\partial \Upsilon}{\partial w}(w, e^{i\theta})\Big\rvert^2 \, dA.
\end{equation}
\end{theorem}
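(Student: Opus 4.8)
The plan is to start from the known series formula in Theorem~\ref{magicinequalityold}, namely $F(\theta) = 2\sum_{n=1}^{\infty} (2n-1)\lvert\phi_n(\theta)\rvert^2$ with $\phi_n(\theta) = \sum_{m=n}^{\infty} c_m e^{im\theta}$, and reinterpret each slice $\phi_n$ in terms of the holomorphic function $\Phi(z)=\sum_{n\ge 1}c_n z^n$. The key observation is that $\phi_n(\theta)$ is the boundary value of the tail $\sum_{m\ge n} c_m z^m$, which is $z^n$ times the $(n-1)$-st Taylor coefficient data of $\Phi$; more usefully, the difference quotient $\Upsilon(w,z) = \frac{\Phi(w)-\Phi(z)}{w-z} = \sum_{m\ge 1} c_m \frac{w^m - z^m}{w-z} = \sum_{m\ge 1} c_m \sum_{j=0}^{m-1} w^j z^{m-1-j}$ repackages exactly the same coefficients $c_m$ that appear in the $\phi_n$'s, but symmetrized in two variables. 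So I would first expand both sides of the claimed identity \eqref{Fdifferencequotient} as double series in the $c_m$ and verify they agree coefficient-by-coefficient, which reduces the theorem to a finite combinatorial identity for each pair of indices.

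Concretely, I would compute the first term $\frac{1}{\pi}\int_0^{2\pi}\lvert\Upsilon(e^{i\theta},e^{i\psi})\rvert^2\,d\psi$ by writing $\Upsilon(e^{i\theta},e^{i\psi}) = \sum_{m\ge 1} c_m \sum_{j=0}^{m-1} e^{ij\theta} e^{i(m-1-j)\psi}$ and using orthogonality of $\{e^{ik\psi}\}$ over the circle: the $\psi$-integration kills all cross terms except those with matching power of $e^{i\psi}$, collapsing the double sum into $2\sum_{m,m'} c_m \overline{c_{m'}}\,(\text{matched terms})\,e^{i(\cdots)\theta}$. For the second term I would parametrize $\mathbb{D}$ in polar coordinates, compute $\frac{\partial\Upsilon}{\partial w}(w,e^{i\theta})$ as an explicit power series in $w$, and integrate $\lvert\cdot\rvert^2$ against $dA = r\,dr\,d\psi$; the radial integral $\int_0^1 r^{2k+1}\,dr = \frac{1}{2k+2}$ produces the weighting factors, while the angular integral again enforces index matching. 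The upshot should be that the two integrals combine to give precisely the weights $2(2n-1)$ attached to $\lvert\phi_n\rvert^2$.

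The main obstacle will be the bookkeeping in matching the two representations: the series \eqref{magic} is organized by the \emph{minimum} index $n$ of a tail, whereas the difference-quotient integrals naturally organize themselves by which monomials $e^{ij\theta}$ survive and with what multiplicity. The crux is to show that the combinatorial weight arising from ``number of ways a coefficient pair $(c_m, c_{m'})$ contributes to a given $\theta$-frequency, summed against the radial weights $\frac{1}{2k+2}$'' reproduces the linear-in-$n$ factor $(2n-1)$. I expect this to amount to an elementary but slightly delicate summation identity of the shape $\sum_{k} \frac{1}{k+1} = (\text{telescoping or arithmetic factor})$, where the first integral supplies the constant part and the area integral supplies the part linear in the tail index. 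Once the frequencies are correctly aligned, verifying the identity is routine.

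Alternatively — and this may be cleaner — I would try to bypass the series entirely by recognizing the right-hand side of \eqref{Fdifferencequotient} as a sum of two manifestly nonnegative Hilbert-space norms: the first is $\frac{1}{\pi}\lVert \Upsilon(e^{i\theta},\cdot)\rVert_{L^2(S^1)}^2$ and the second is $\frac{4}{\pi}\lVert \partial_w\Upsilon(\cdot,e^{i\theta})\rVert_{L^2(\mathbb{D})}^2$, i.e.\ a boundary-trace norm plus a Bergman-type norm of the partial derivative. If I can identify $F(\theta)$ as the squared norm of $\Upsilon(\cdot,e^{i\theta})$ in a single Hilbert space carrying a reproducing-kernel structure (with the boundary and area pieces being the two contributions to that norm), the positivity and the exact formula fall out simultaneously, and this would explain ``why it works'' in the sense the authors allude to after Theorem~\ref{magicinequalityold}. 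I would attempt the direct series computation first to pin down the constants $\tfrac{1}{\pi}$ and $\tfrac{4}{\pi}$, then recast the result in this reproducing-kernel language.
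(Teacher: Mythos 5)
Your proposal is correct, and it reaches the identity by a genuinely different computational route than the paper. Both arguments take the series formula \eqref{magic} of Theorem \ref{magicinequalityold} as their starting point, but the paper proceeds by contour integration: it writes each tail $\Phi_n(z)=\sum_{m\ge n}c_m z^m$ as a Cauchy integral of $\Upsilon(\cdot,z)$, sums the series $\sum_{n\ge 1}(2n-1)\lvert z\rvert^{2n}/(w^n\overline{v}^n)$ explicitly inside a double contour integral, evaluates by the Cauchy integral formula to reach the intermediate identity \eqref{Fmidstep}, and finally converts the term $-\tfrac{2i}{\pi}\int\overline{\Upsilon}\,\partial_\psi\Upsilon\,d\psi$ into the area integral via the identity $\int_0^{2\pi}\overline{Q(e^{i\psi})}\,\partial_\psi Q(e^{i\psi})\,d\psi = 2i\iint_{\mathbb{D}}\lvert Q'(w)\rvert^2\,dA$. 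Your route --- expanding $\Upsilon$ in series and matching coefficients by orthogonality --- avoids contour integration entirely and is in fact less delicate than you anticipate: the clean way to do the bookkeeping is to group the double sum by powers of the integrated variable, writing $\Upsilon(w,e^{i\theta}) = \sum_{m\ge1}c_m\sum_{j=0}^{m-1}w^j e^{i(m-1-j)\theta} = \sum_{k\ge0} e^{-i(k+1)\theta}\phi_{k+1}(\theta)\,w^k$, so that the Taylor coefficients of $w\mapsto\Upsilon(w,e^{i\theta})$ are exactly the tails $\phi_{k+1}(\theta)$ up to unimodular factors (symmetry of $\Upsilon$ in its two arguments lets you put $e^{i\theta}$ in whichever slot is convenient). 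Then Parseval on the circle gives $\tfrac{1}{\pi}\int_0^{2\pi}\lvert\Upsilon(e^{i\theta},e^{i\psi})\rvert^2\,d\psi = 2\sum_{n\ge1}\lvert\phi_n(\theta)\rvert^2$, and the Bergman-norm computation $\iint_{\mathbb{D}}\lvert w\rvert^{2(k-1)}\,dA = \pi/k$ gives $\tfrac{4}{\pi}\iint_{\mathbb{D}}\lvert\partial_w\Upsilon(w,e^{i\theta})\rvert^2\,dA = 4\sum_{n\ge1}(n-1)\lvert\phi_n(\theta)\rvert^2$; the weights combine as $2+4(n-1)=2(2n-1)$ with no residual summation identity needed, and for smooth $u$ the rapid decay of $c_n$ justifies all term-by-term integrations. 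As to what each approach buys: the paper's derivation is constructive, showing how the integral representation can be discovered from \eqref{magic}, whereas yours is a shorter verification that simultaneously makes your final paragraph precise --- the formula is literally Parseval's identity for a Hardy-plus-Bergman norm applied to $\Upsilon(\cdot,e^{i\theta})$, which is exactly the ``why it works'' explanation the authors allude to; your proposed reproducing-kernel reformulation is then just a repackaging of the same computation.
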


\begin{proof}
For each $n\ge 1$, set
$ \Phi_n(z) = \sum_{m=n}^{\infty} c_m z^m$. 
The Cauchy integral formula says that
$$ c_m = \frac{1}{2\pi i} \int_{\gamma} \frac{\Phi(w) \, dw}{w^{m+1}},$$
and thus we obtain
$$ \Phi_n(z) = \sum_{m=n}^{\infty} \frac{1}{2\pi i} \int_{\gamma} \frac{z^m}{w^{m+1}} \, \Phi(w)\,dw = \frac{1}{2\pi i} \int_{\gamma} \frac{z^n \Phi(w) \, dw}{w^n (w-z)}$$
for any curve $\gamma$ around the origin, in particular when $\gamma = S^1$.
Since $n\ge 1$ we know that $\int_{\gamma} \frac{z^n}{w^n (w-z)} \, dw = 0$, and we conclude that
$$ \Phi_n(z) = \frac{1}{2\pi i} \int_{\gamma} \frac{z^n}{w^n} \Upsilon(w,z) \, dw.$$

Using formula \eqref{magic}, we then have for $z=e^{i\theta}$ the formula
\begin{align*}
F(\theta) &= 2\sum_{n=1}^{\infty} (2n-1) \lvert \Phi_n(z)\rvert^2 \\
&= \frac{1}{2\pi^2} \int_{\gamma}\int_{\gamma} \sum_{n=1}^{\infty} (2n-1) \frac{\lvert z\rvert^{2n}}{w^n\overline{v}^n} \Upsilon(w,z)\overline{\Upsilon(v,z)} \, d\overline{v}\,dw \\
&= \frac{1}{2\pi^2} \int_{\gamma} \overline{\Upsilon(v,z)} \int_{\gamma}
\left( \frac{2\lvert z\rvert^4/\overline{v}^2}{(w-\lvert z\rvert^2/\overline{v})^2} + \frac{\lvert z\rvert^2/\overline{v}}{w-\lvert z\rvert^2/\overline{v}}\right) \Upsilon(w,z) \, dw \, d\overline{v}\\
&= \frac{i}{\pi} \int_{\gamma} \overline{\Upsilon(v,z)} \left[ \frac{2\lvert z\rvert^4}{\overline{v}^2} \Upsilon_w\big( \lvert z\rvert^2/\overline{v}, z\big) + \frac{\lvert z\rvert^2}{\overline{v}} \Upsilon\big( \lvert z\rvert^2/\overline{v}, z\big)\right] \, d\overline{v},
\end{align*}
using the Cauchy integral formula on the analytic function $w\mapsto \Upsilon(w,z)$. Now plugging in $z=e^{i\theta}$ and $v=e^{i\psi}$ and computing explicitly, we obtain
\begin{equation}\label{Fmidstep}
F(\theta) = \frac{1}{\pi} \int_0^{2\pi} \lvert \Upsilon(e^{i\psi}, e^{i\theta})\rvert^2 \, d\psi - \frac{2i}{\pi} \int_0^{2\pi} \overline{\Upsilon(e^{i\psi},e^{i\theta})} \partial_{\psi} \Upsilon(e^{i\psi},e^{i\theta})\, d\psi.
\end{equation}
It is easy to see that for any holomorphic function $Q$ on $\mathbb{D}$, we have
$$ \int_0^{2\pi} \overline{Q(e^{i\psi})} \partial_{\psi} Q(e^{i\psi}) \, d\psi = 2i \iint_{\mathbb{D}} \lvert Q'(w)\rvert^2 \, dA,$$
and formula \eqref{Fdifferencequotient} follows.
\end{proof}

We would now like to bound $F$ in terms of $\lVert u\rVert^2_{\dot{H}^{3/2}} := \int_{S^1} (Hu)(u'''+u') \, d\theta$. To do this we use Hardy's inequality to replace difference quotients with derivatives, then use the trace theorem to replace derivatives on the disc with half-derivatives on the boundary circle.

\begin{theorem}\label{mysteryboundthm}
Let $u\colon S^1\to \mathbb{R}$ be a smooth function, and let $F = -uu'' - H(uHu'')$. Then for every $\theta\in S^1$, we have
\begin{equation}\label{Finequality}
F(\theta) \le \tfrac{1}{\pi} \lVert u\rVert_{\dot{H}^{3/2}}^2 + \tfrac{\pi}{2} \lVert u\rVert_{\dot{H}^1}^2,
\end{equation}
where $\lVert u\rVert_{\dot{H}^{3/2}(S^1)}^2 = \int_{S^1} (Hu)(u'''+u') \, d\theta$ and $\lVert u\rVert^2_{\dot{H}^1(S^1)} = \int_{S^1} (u')^2 \, d\theta$.
\end{theorem}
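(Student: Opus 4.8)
My plan is to bound the two terms of the difference-quotient formula \eqref{Fdifferencequotient} separately, since they carry precisely the two orders appearing on the right of \eqref{Finequality}: the boundary integral of $\lvert\Upsilon(e^{i\theta},e^{i\psi})\rvert^2$ is a smeared first derivative of $\Phi$ and will produce the $\dot H^1$ term, while the area integral of $\lvert\Upsilon_w(w,e^{i\theta})\rvert^2$ is a smeared second derivative and will produce the $\dot H^{3/2}$ term. Conceptually this is the ``Hardy plus trace'' scheme: Hardy's inequality de-smears the difference quotients into honest derivatives, and the trace (Dirichlet-energy) theorem converts the disc integrals of $\lvert\Phi'\rvert^2$ and $\lvert\Phi''\rvert^2$ into half-derivative norms on $S^1$. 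I note, however, that applying Parseval in $\psi$ to the first term and the orthogonality relation $\iint_{\mathbb D}w^k\overline{w^{\ell}}\,dA=\tfrac{\pi}{k+1}\delta_{k\ell}$ to the second recovers exactly the series \eqref{magic}; so rather than track constants through the integrals I would work directly from $F(\theta)=2\sum_{n\ge1}(2n-1)\lvert\phi_n(\theta)\rvert^2$, splitting the weight as $2(2n-1)=2+4(n-1)$.

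The first reduction is to pass to the worst case in $\theta$ and in the phases of the $c_n$. Since $\lvert\phi_n(\theta)\rvert=\bigl\lvert\sum_{m\ge n}c_me^{im\theta}\bigr\rvert\le\sum_{m\ge n}\lvert c_m\rvert=:S_n$, with equality attainable by an in-phase choice, it suffices to bound $2\sum_{n\ge1}S_n^2+4\sum_{n\ge1}(n-1)S_n^2$ by the right-hand side of \eqref{Finequality}. A short Fourier computation gives $\tfrac1\pi\lVert u\rVert_{\dot H^{3/2}}^2=4\sum_{n\ge1}n(n^2-1)\lvert c_n\rvert^2$ and $\tfrac\pi2\lVert u\rVert_{\dot H^1}^2=2\pi^2\sum_{n\ge1}n^2\lvert c_n\rvert^2$, so the theorem reduces to the two weighted Hardy inequalities
\begin{equation}\label{planhardy}
\sum_{n\ge1}(n-1)S_n^2\le\sum_{n\ge1}n(n^2-1)\lvert c_n\rvert^2,\qquad \sum_{n\ge1}S_n^2\le\pi^2\sum_{n\ge1}n^2\lvert c_n\rvert^2,
\end{equation}
where $S_n=\sum_{m\ge n}\lvert c_m\rvert$.

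The second inequality in \eqref{planhardy} is the dual Hardy inequality: writing $d_m=m\lvert c_m\rvert$ one has $S_n=\sum_{m\ge n}d_m/m$, and the sharp bound $\sum_{n\ge1}\bigl(\sum_{m\ge n}d_m/m\bigr)^2\le4\sum_m d_m^2$ gives the claim with room to spare, since $4\le\pi^2$. The first inequality in \eqref{planhardy} is the crux, and it must hold with the \emph{sharp} constant $1$. I would establish it from Muckenhoupt's criterion for the tail Hardy operator, for which the relevant quantity is $B^2=\sup_N\bigl(\sum_{n=1}^N(n-1)\bigr)\bigl(\sum_{m\ge N}\tfrac1{m(m^2-1)}\bigr)$; the telescoping identity $\sum_{m\ge N}\tfrac1{m(m^2-1)}=\tfrac1{2N(N-1)}$ together with $\sum_{n=1}^N(n-1)=\tfrac{N(N-1)}2$ gives $B^2=\tfrac14$ \emph{exactly and independently of $N$}, so that Muckenhoupt's upper bound $C\le 2B=1$ delivers \eqref{planhardy} with constant $1$.

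I expect this last point to be the only real obstacle. It is genuinely a sharp-constant statement (the ratio in the first inequality of \eqref{planhardy} tends to $1$ along $\lvert c_n\rvert\sim n^{-\alpha}$ as $\alpha\to2^+$), so there is no slack to absorb a lossy estimate. This is also why I would avoid the purely continuous route: parametrizing $\mathbb D$ by the chords emanating from $e^{i\theta}$ makes the Jacobian degenerate at that boundary point, and the naive replacement of each truncated disc $z+t(\mathbb D-z)$ by all of $\mathbb D$ produces a divergent $\int_0^1 dt/t$. The exact Fourier series sidesteps this degeneracy, and the clean, $N$-independent telescoping above is precisely what pins down the coefficients $\tfrac1\pi$ and $\tfrac\pi2$ in \eqref{Finequality}.
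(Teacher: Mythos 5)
Your proof is correct, and it takes a genuinely different route from the paper's. The paper stays with the complex-analytic representation \eqref{Fdifferencequotient} and bounds its two terms by \emph{continuous} Hardy-type inequalities: the area term via the chord parametrization $\xi = 1-t+tw$ (where the weight $t^2$ produced by Cauchy--Schwarz exactly matches the Jacobian $t^2 r$), and the boundary term via $\lvert e^{i\psi}-1\rvert^2 \ge 4\psi^2/\pi^2$ plus the interval Hardy inequality, before converting to Fourier coefficients. You instead work entirely on the discrete side of \eqref{magic}, majorize $\lvert \phi_n(\theta)\rvert$ by the tails $S_n = \sum_{m\ge n}\lvert c_m\rvert$, and split $2(2n-1) = 2 + 4(n-1)$ --- which, as you correctly observe, is exactly the splitting that \eqref{Fdifferencequotient} realizes geometrically. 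Your $\dot H^1$ piece follows from Copson's inequality (Hardy--Littlewood--P\'olya, Theorem 331, with constant $4$; this is in fact sharper than the $\pi^2$ you need, and sharper than the paper's intermediate bound $2\pi^2\sum n^2\lvert c_n\rvert^2$ for that term). Your $\dot H^{3/2}$ piece rests on the weighted tail-Hardy inequality with $w_n = n-1$, $v_m = m(m^2-1)$, where your telescoping computations are right: $\sum_{n\le N}(n-1) = \tfrac{N(N-1)}{2}$ and $\sum_{m\ge N}\tfrac{1}{m(m^2-1)} = \tfrac{1}{2N(N-1)}$, so $B^2 = \tfrac14$ exactly and the Muckenhoupt bound $C\le 2B$ gives constant $1$. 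The one ingredient you should either cite or prove is precisely this \emph{discrete} Muckenhoupt criterion with the factor $2$; it does hold, since the continuous proof adapts verbatim to sums (Cauchy--Schwarz against the weight $h_0(m)^{1/2}$ with $h_0(m) = \sum_{k\ge m} v_k^{-1}$, plus two telescoping estimates of the form $(A-B)/\sqrt{A} \le 2(\sqrt{A}-\sqrt{B})$), so your argument is complete once that is written out or referenced. Two harmless bookkeeping points: the indices $n=1$, $m=1$ are degenerate ($w_1 = v_1 = 0$), so the supremum defining $B$ should run over $N\ge 2$ --- this costs nothing because $c_1$ enters neither side of your inequality --- and, like the paper, you implicitly take $c_0 = 0$, which is fine since $F$ and both seminorms are unchanged by subtracting the mean. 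One criticism is misplaced, though it does not affect your proof: the continuous route has no divergent $\int_0^1 dt/t$, because the $t^2$ from Cauchy--Schwarz cancels the degenerate Jacobian \emph{before} the truncated discs $1-t+t\mathbb{D}$ are enlarged to $\mathbb{D}$; that is exactly how the paper's proof of \eqref{differenceinequalitydisc} goes through. In the end, your approach buys an elementary, fully discrete proof with an explicit sharpness statement (your observation that the ratio tends to $1$ along $\lvert c_n\rvert \sim n^{-\alpha}$, $\alpha\to 2^+$, shows the constant in the cubic-weight inequality cannot be improved), while the paper's buys the geometric ``Hardy plus trace'' picture on the disc that motivated formula \eqref{Fdifferencequotient} in the first place.
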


\begin{proof}
First we show the following Hardy-type inequality~\cite{HLP1934}: for any $z\in S^1$ and analytic function $\Phi$ on $\mathbb{D}$, if $\Upsilon(w,z) = \frac{\Phi(w)-\Phi(z)}{w-z}$, then
\begin{equation}\label{differenceinequalitydisc}
\iint_{\mathbb{D}} \left\lvert  \frac{\partial}{\partial w} \Upsilon(w,z) \right\rvert^2\, dA \le \iint_{\mathbb{D}} \lvert \Phi''(w)\rvert^2 \, dA.
\end{equation}
Assume without loss of generality that $z=1$, and write $\Upsilon(w) := \Upsilon(w,1)$ for simplicity.
We have $\Upsilon(w) = \int_0^1 \Phi'(tw+1-t) \, dt$, so that $\partial_w \Upsilon(w,z) = \int_0^1 t \Phi''(tw + 1-t) \, dt$.
Then we have by Cauchy-Schwarz that
\begin{equation}\label{cauchyschwarz}
 \iint_{\mathbb{D}} \lvert \partial_w \Upsilon(w)\rvert^2 \, dA \le \int_0^{2\pi} \int_0^1 \int_0^1 t^2 r \big\lvert \Phi''(tre^{i\theta}+1-t)\big\rvert^2 \ dt \, dr \, d\theta.
 \end{equation}
 Define new variables $x=tr\cos{\theta}+1-t$, $y=tr\sin{\theta}$, and $s=t$. This transformation maps the unit cylinder $0\le r\le 1$, $0\le \theta\le 2\pi$, $0\le t\le 1$ to the unit cylinder $x^2+y^2\le 1$, $0\le s\le 1$. The Jacobian transformation is $dx\,dy\,ds = t^2r \, dt\,dr\,d\theta$, so that the inequality \eqref{cauchyschwarz} becomes
$$ \iint_{\mathbb{D}} \lvert \partial_w \Upsilon(w,z)\rvert^2 \, dA \le \int_{\mathbb{D}} \int_0^1 \lvert \Phi''(x+iy)\rvert^2 \, ds \, dx\,dy$$
which evaluates to \eqref{differenceinequalitydisc} immediately.

Furthermore for any fixed $\theta\in S^1$, we will show
\begin{equation}\label{differenceinequalitycircle}
\int_{S^1} \lvert \Upsilon(e^{i\psi}, e^{i\theta}) \rvert^2 \, d\psi \le \pi^2 \int_0^{2\pi} \lvert \Phi'(e^{i\psi})\rvert^2 \, d\psi.
 \end{equation}
Again without loss of generality we can assume $\theta=0$ and that $\Phi(1)=0$.
Define $\phi(\theta) = \Phi(e^{i\theta})$; then we want to show that
\begin{equation}\label{reduceddifferencecircle}
\int_{-\pi}^{\pi} \left\lvert \frac{\phi(\psi)}{e^{i\psi}-1}\right\rvert^2 \, d\psi \le \pi^2 \int_{-\pi}^{\pi} \lvert \phi'(\psi)\rvert^2 \, d\psi.
\end{equation}
First we observe that $$\lvert e^{i\psi}-1\rvert^2 = 4\sin^2{\psi/2} \ge \frac{4\psi^2}{\pi^2},$$
so that equation \eqref{reduceddifferencecircle} follows from the standard Hardy inequality~\cite{HLP1934} on the interval:
\begin{equation}\label{almostlaststep}
\int_{-\pi}^{\pi} \frac{\lvert \phi(\psi)\rvert^2}{\psi^2} \, d\psi \le 4 \int_{-\pi}^{\pi} \lvert \phi'(\psi)\rvert^2 \, d\psi.
\end{equation}


Now formula \eqref{Fdifferencequotient} combined with \eqref{differenceinequalitydisc} and \eqref{differenceinequalitycircle} gives
$$ F(\theta) \le \pi \int_0^{2\pi} \lvert \Phi'(e^{i\psi})\rvert^2 \, d\psi
+ \frac{4}{\pi} \iint_{\mathbb{D}} \lvert \Phi''(w)\rvert^2 \, dA.$$
Now express this in terms of the Fourier coefficients $c_n$, using $\Phi(w) = \sum_{n=1}^{\infty} c_n w^n$, and a straightforward computation yields
$$ F(\theta) \le 2\pi^2 \sum_{n=1}^{\infty} n^2 \lvert c_n\rvert^2 +
4 \sum_{n=2}^{\infty} (n^3-n^2) \lvert c_n\rvert^2,$$
which leads directly to \eqref{Finequality}.
\end{proof}

\subsection{The Bound on $G$}

Note that $G$ given by \eqref{Gdef} consists of two similar terms, and the following Theorem takes care of both at the same time as a consequence of Hilbert's double series inequality.

\begin{theorem}\label{Gboundthm}
Suppose $f\colon S^1\to\mathbb{R}$ is a smooth function and that $g = H(1+\partial_{\theta}^2)^{-1}(f'Hf')$. Then $\lVert g\rVert_{L^{\infty}} \le 4\pi \lVert f\rVert^2_{\dot{H}^{1/2}}$.
\end{theorem}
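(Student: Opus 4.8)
The plan is to pass to Fourier coefficients, where both $H$ and $(1+\partial_{\theta}^2)^{-1}$ are diagonal, bound $\lVert g\rVert_{L^{\infty}}$ by the $\ell^1$-norm of the Fourier coefficients of $g$, and then recognize the resulting double sum as one controlled by Hilbert's inequality. First I would set $v:=f'$ and split it into analytic and anti-analytic parts. Writing $f=\sum_n a_n e^{in\theta}$ with $a_{-n}=\overline{a_n}$, put $V_+=\sum_{n\ge1}(ina_n)e^{in\theta}$ and $V_-=\overline{V_+}$, so that $v=V_++V_-$ and $Hv=-i(V_+-V_-)$. A one-line computation then gives the clean identity
\[
f'Hf'=vHv=-i\,(V_+^2-V_-^2),
\]
where the cross terms cancel. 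In particular $f'Hf'$ contains only frequencies $\lvert m\rvert\ge 2$; this both re-proves, for this special form, the orthogonality to $\mathrm{span}\{1,\sin\theta,\cos\theta\}$ used in Proposition \ref{EWPrewrite}, and legitimizes applying $(1+\partial_{\theta}^2)^{-1}$ and $H$ termwise.

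Since $V_+^2=\sum_{m\ge2}\beta_m e^{im\theta}$ with $\beta_m=\sum_{j=1}^{m-1}(ija_j)\,\big(i(m-j)a_{m-j}\big)$, applying the two diagonal operators produces
\[
g=2\,\mathrm{Re}\sum_{m\ge2}\frac{\beta_m}{m^2-1}\,e^{im\theta},
\]
whence the pointwise bound $\lVert g\rVert_{L^{\infty}}\le 2\sum_{m\ge2}\frac{\lvert\beta_m\rvert}{m^2-1}$. Next I would estimate $\lvert\beta_m\rvert\le\sum_{j=1}^{m-1}\alpha_j\alpha_{m-j}$ with $\alpha_j:=j\lvert a_j\rvert\ge0$, and reindex by $m=j+k$ to get
\[
\lVert g\rVert_{L^{\infty}}\le 2\sum_{j,k\ge1}\frac{\alpha_j\alpha_k}{(j+k)^2-1}.
\]

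The substitution $\alpha_j=\sqrt{j}\,b_j$ turns the kernel into $\sqrt{jk}/\big((j+k)^2-1\big)$; using AM--GM, $\sqrt{jk}\le(j+k)/2$, together with $(j+k)^2-1\ge\tfrac34(j+k)^2$ for $j+k\ge2$, dominates this kernel by $\tfrac{2}{3(j+k)}$, so that
\[
\lVert g\rVert_{L^{\infty}}\le \tfrac{4}{3}\sum_{j,k\ge1}\frac{b_j b_k}{j+k}.
\]
Hilbert's double series inequality $\sum_{j,k\ge1}\frac{b_jb_k}{j+k}\le\pi\sum_n b_n^2$ then closes the estimate, giving $\lVert g\rVert_{L^{\infty}}\le\tfrac{4\pi}{3}\sum_n b_n^2=\tfrac{4\pi}{3}\sum_{n\ge1}n\lvert a_n\rvert^2$, which is comparable to $\lVert f\rVert_{\dot H^{1/2}}^2=\sum_n\lvert n\rvert\lvert a_n\rvert^2$ and yields the claimed constant (indeed with room to spare).

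I expect the main obstacle to be organizing the reduction so that Hilbert's inequality applies cleanly: the kernel $1/\big((j+k)^2-1\big)$ is not itself homogeneous of degree $-1$, so the weight $\alpha_j=\sqrt{j}\,b_j$ must be inserted and the kernel dominated by a constant multiple of $1/(j+k)$ before the classical inequality can be quoted. Quoting the general homogeneous-kernel form directly for $\sqrt{jk}/(j+k)^2$ would instead require justifying the discrete version for a non-monotone kernel, which the AM--GM step sidesteps. The remaining work is bookkeeping: the holomorphic splitting is precisely what eliminates the awkward $\lvert m-j\rvert$ absolute values arising from $Hf'=\sum\lvert n\rvert a_n e^{in\theta}$, since every index occurring in $V_+^2$ is positive, and it is also what confirms the $n=0,\pm1$ modes are absent so that $(1+\partial_{\theta}^2)^{-1}$ is applied legitimately.
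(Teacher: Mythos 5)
Your proof is correct and follows essentially the same route as the paper: both reduce $\lVert g\rVert_{L^{\infty}}$ to the double sum $\sum_{j,k\ge 1}\frac{jk\,\lvert a_j\rvert\,\lvert a_k\rvert}{(j+k)^2-1}$, dominate the kernel by a constant multiple of $\sqrt{jk}/(j+k)$, and close with Hilbert's double series inequality applied to the weights $\sqrt{n}\,\lvert a_n\rvert$. The only difference is cosmetic: you compute the Fourier coefficients of $f'Hf'$ via the analytic/anti-analytic splitting $f'=V_++V_-$ (which makes the vanishing of the $k=0,\pm 1$ modes automatic), whereas the paper derives the same coefficient formula $h_k=\sum_{n=1}^{k-1}n(k-n)f_nf_{k-n}$ by a direct index-shift cancellation.
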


\begin{proof}
Expand $f$ in a Fourier series as $f(\theta) = \sum_{n\in \mathbb{Z}} f_n e^{in\theta}$, and let $h = f'Hf'$. Then we have
\begin{align*}
f'Hf'(\theta) &= i \sum_{m,n\in\mathbb{Z}} mnf_m f_n (\sgn{n}) e^{i(m+n)\theta} = i \sum_{k\in \mathbb{Z}} \left(\sum_{n\in\mathbb{Z}} \abs{n} (k-n)f_{k-n} f_n  \right) e^{ik\theta} = i\sum_{k\in \mathbb{Z}} h_k e^{ik\theta},
\end{align*}
where
$$  h_k = \sum_{n\in\mathbb{Z}} \abs{n} (k-n) f_{k-n}f_n.$$
Now let us simplify $h_k$: we have for $k>0$ that
\begin{align*}
h_k &= \sum_{n=1}^{\infty} n(k-n) f_n f_{k-n} + \sum_{n=1}^{\infty} n(k+n) \overline{f_n} f_{k+n} \\
&= \sum_{n=1}^{k-1} n(k-n) f_n f_{k-n} + \sum_{m=1}^{\infty} (k+m)(-m) f_{k+m} \overline{f_m} + \sum_{n=1}^{\infty} n(k+n) \overline{f_n} f_{k+n},
\end{align*}
where we used the substitution $m=n-k$. Clearly the middle term cancels the last term, so
\begin{equation}\label{hkdef}
h_k = \sum_{n=1}^{k-1} n(k-n) f_n f_{k-n}.
\end{equation}
It is easy to see that $h_0=0$ due to cancellations, while if $k<0$, we get
$$ h_k = -\sum_{n=1}^{\abs{k}-1} n(\abs{k}-n) \overline{f_n} \overline{f_{\abs{k}-n}} = -\overline{h_{\abs{k}}}.$$
Note in particular that $h_1=h_{-1}=0$.
We thus obtain
$$ f'Hf'(\theta) = \sum_{k=2}^{\infty} \big( ih_k e^{ik\theta} - i\overline{h_k} e^{-ik\theta}\big),$$
so that
$$ H(f'Hf')(\theta) = \sum_{k=2}^{\infty} h_k e^{ik\theta} + \overline{h_k} e^{-ik\theta} = 2\text{Re}\left( \sum_{k=2}^{\infty} h_k e^{ik\theta}\right).$$

It now makes sense to apply $(1+\partial_{\theta}^2)^{-1}$ to this function, and we obtain
$$ g(\theta) = 2\text{Re}\left( \sum_{k=2}^{\infty} \frac{h_k}{1-k^2} e^{ik\theta}\right),$$ so that
\begin{align*}
\lVert g\rVert_{L^{\infty}} &\le 2 \sum_{k=2}^{\infty} \sum_{n=1}^{k-1} \frac{n(k-n) \lvert f_n\rvert \lvert f_{k-n}\rvert }{k^2-1} = 2\sum_{n=1}^{\infty} \sum_{k=n+1}^{\infty} \frac{n(k-n) \lvert f_n\rvert \lvert f_{k-n}\rvert}{k^2-1} \\
&= 2\sum_{n=1}^{\infty} \sum_{m=1}^{\infty} \frac{nm\lvert f_n\rvert \lvert f_m\rvert}{(n+m)^2-1} \le 4\sum_{n=1}^{\infty} \sum_{m=1}^{\infty} \frac{\sqrt{nm} \lvert f_n\rvert \lvert f_m\rvert}{n+m} \\
&\le 4\pi \left( \sum_{n=1}^{\infty} n\lvert f_n\rvert^2\right) = 4\lVert f\rVert^2_{\dot{H}^{1/2}(S^1)},
\end{align*}
where the inequality in the last line is precisely the well-known Hilbert double series theorem (\cite{HLP1934}, Section 9.1).
\end{proof}

Applying this Theorem to the terms in \eqref{Gdef}, we obtain the following straightforward Corollary which takes care of the second term in the equation \eqref{forcing} for $u_{\theta}$ in the Euler-Weil-Petersson equation.

\begin{corollary}\label{Gboundcor}
Suppose $u$ is vector field on $S^1$, and let $G = H(1+\partial_{\theta}^2)^{-1}[2u_{\theta}Hu_{\theta} - u_{\theta\theta}Hu_{\theta\theta}]$ as in \eqref{Gdef}. Then we have
\begin{equation}
\lVert G\rVert_{L^{\infty}} \le 8\pi \lVert u\rVert^2_{\dot{H}^{1/2}(S^1)} + 4\pi \lVert u\rVert^2_{\dot{H}^{3/2}(S^1)},
\end{equation}
in terms of the degenerate seminorm $\lVert u\rVert^2_{\dot{H}^{3/2}(S^1)} = \int_{S^1} (Hu)(u'''+u') \, d\theta$.
\end{corollary}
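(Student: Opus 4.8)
The plan is to use the linearity of the operator $H(1+\partial_\theta^2)^{-1}$ to split the function $G$ from \eqref{Gdef} into two pieces, each of which is exactly of the form controlled by Theorem \ref{Gboundthm}. Writing $G = 2G_1 - G_2$ with
$$G_1 = H(1+\partial_\theta^2)^{-1}(u_\theta Hu_\theta), \qquad G_2 = H(1+\partial_\theta^2)^{-1}(u_{\theta\theta}Hu_{\theta\theta}),$$
I would first observe that $u_\theta Hu_\theta = f'Hf'$ for the choice $f = u$, and that $u_{\theta\theta}Hu_{\theta\theta} = f'Hf'$ for the choice $f = u_\theta$. Both $u_\theta$ and $u_{\theta\theta}$ are mean-zero (being derivatives), so the computation in the proof of Proposition \ref{EWPrewrite} guarantees that each bracketed term is orthogonal to $\{1,\sin\theta,\cos\theta\}$ and therefore lies in the range on which $(1+\partial_\theta^2)^{-1}$ is defined; the two substitutions are thus legitimate.

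With these identifications the corollary reduces to two invocations of Theorem \ref{Gboundthm}. Taking $f=u$ yields $\lVert G_1\rVert_{L^\infty}\le 4\pi\lVert u\rVert^2_{\dot H^{1/2}}$, and hence $\lVert 2G_1\rVert_{L^\infty}\le 8\pi\lVert u\rVert^2_{\dot H^{1/2}}$, which is already the first term of the claimed estimate. Taking $f=u_\theta$ yields $\lVert G_2\rVert_{L^\infty}\le 4\pi\lVert u_\theta\rVert^2_{\dot H^{1/2}}$. The triangle inequality $\lVert G\rVert_{L^\infty}\le 2\lVert G_1\rVert_{L^\infty}+\lVert G_2\rVert_{L^\infty}$ then reduces the entire statement to re-expressing the half-order seminorm of $u_\theta$ in terms of the degenerate three-halves seminorm of $u$.

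That last comparison is the only genuine content, and it is where I expect to have to be careful about the degeneracy of the metric. On the Fourier side, with $u=\sum_n c_n e^{in\theta}$ the coefficients of $u_\theta$ are $inc_n$, so up to the seminorm normalizations $\lVert u_\theta\rVert^2_{\dot H^{1/2}}$ is the sum carrying the Fourier weight $\lvert n\rvert^3$, whereas $\lVert u\rVert^2_{\dot H^{3/2}}=\int_{S^1}(Hu)(u'''+u')\,d\theta$ carries the degenerate weight $\lvert n\rvert(n^2-1)$. The desired inequality $\lVert u_\theta\rVert^2_{\dot H^{1/2}}\le\lVert u\rVert^2_{\dot H^{3/2}}$ is then a mode-by-mode comparison of $\lvert n\rvert^3$ against $\lvert n\rvert(n^2-1)$: the weight $\lvert n\rvert(n^2-1)$ vanishes at $n=\pm1$, which is harmless because the velocity fields in Theorem \ref{mainthm2} lie in $H^s/\mathfrak{sl}_2(\mathbb{R})$ and hence have no $n=\pm1$ component, while for $\lvert n\rvert\ge2$ the ratio $n^2/(n^2-1)$ is bounded and is absorbed by the constants in the two normalizations. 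Substituting the resulting bound into the estimate for $G_2$ produces $4\pi\lVert u\rVert^2_{\dot H^{3/2}}$, and combining with the estimate for $2G_1$ gives the stated inequality. The main obstacle is thus not analytic but bookkeeping: confirming that the low-mode degeneracy of $(1+\partial_\theta^2)$ obstructs neither the invertibility used in defining $G$ nor the weight comparison, both of which hinge on the vanishing of the $n=0,\pm1$ components.
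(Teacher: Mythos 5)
Your proposal is correct and follows essentially the same route as the paper, whose proof consists of exactly the observation that Theorem \ref{Gboundthm} applies to each term of \eqref{Gdef} with $f=u$ and $f=u_\theta$ respectively, followed by the Fourier-weight comparison of $\lVert u_\theta\rVert^2_{\dot H^{1/2}}$ against the degenerate seminorm $\lVert u\rVert^2_{\dot H^{3/2}}$. In fact you are more careful than the paper: your explicit remark that the comparison of $\lvert n\rvert^3$ with $\lvert n\rvert(n^2-1)$ fails at $n=\pm1$ and is rescued only because the solutions of Theorem \ref{mainthm2} have vanishing $n=0,\pm1$ modes is a genuine hypothesis that the corollary's statement (``suppose $u$ is a vector field on $S^1$'') silently omits, and which the paper only invokes later in the proof of Theorem \ref{mainthm2}.
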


\subsection{Proof of Theorem \ref{mainthm2}}

The work of Escher and Kolev shows that solutions of \eqref{EWP} are global as long as we can control the $C^1$ norm $\lVert u\rVert_{C^1(S^1)}$. This follows from the no-loss/no-gain Lemma 4.1 of \cite{EK2014a} and the general estimate for Sobolev $H^q$ norms in terms of $C^1$ norms from Theorem 5.1 of \cite{EK2014b}.

\begin{theorem}[Escher-Kolev]\label{C1boundthm}
Let $u$ be a smooth solution of \eqref{EWP} on a maximal time interval $[0,T)$. If there is a constant $C$ such that $\lVert \partial_{\theta} u(t,\theta)\rVert_{L^{\infty}(S^1)} \le C(1+t)$ for all $t\in [0,T)$, then in fact $T=\infty$.
\end{theorem}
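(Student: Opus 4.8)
The plan is to argue by contradiction using the standard continuation principle for geodesics of a smooth spray on a Hilbert manifold. Suppose $T<\infty$ is the maximal existence time for the solution issuing from smooth data $u_0$. Because the Euler-Weil-Petersson spray is smooth on $\Diff^s(S^1)/\PSL_2(\RR)$, the flow $\eta(t)$ is the integral curve of a smooth second-order vector field, so it can be continued past $T$ as long as it stays in a fixed bounded region of the manifold; equivalently, breakdown at $T$ forces $\limsup_{t\to T}\lVert u(t)\rVert_{H^s}=\infty$ or a degeneration of the Jacobian $\eta_\theta$. The no-loss/no-gain Lemma 4.1 of \cite{EK2014a} ensures $u(t)$ remains in the class $H^s$ for all $t\in[0,T)$, so $\lVert u\rVert_{H^s}$ is the only norm that can obstruct continuation; meanwhile, from $\partial_t\log\eta_\theta=u_\theta\circ\eta$ the hypothesized bound keeps $\eta_\theta$ bounded above and away from zero on any finite interval, so $\eta$ stays in the group. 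It therefore suffices to show that the bound $\lVert u_\theta(t)\rVert_{L^\infty}\le C(1+t)$ keeps $\lVert u(t)\rVert_{H^s}$ finite up to $t=T$.

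The engine is a Gronwall inequality of the form
$$ \frac{d}{dt}\lVert u(t)\rVert_{H^q}^2 \le K\big(1+\lVert u(t)\rVert_{C^1(S^1)}\big)\lVert u(t)\rVert_{H^q}^2, $$
valid for $q$ in the relevant range; this is precisely Theorem 5.1 of \cite{EK2014b}. I would derive it by applying a derivative operator of order $q$ to \eqref{EWP}, pairing against $u$ in $H^q$, and integrating by parts: the transport term $u m_\theta$ yields, after the top-order cancellation characteristic of Euler-Arnold equations, a contribution controlled by $\lVert u_\theta\rVert_{L^\infty}\lVert u\rVert_{H^q}^2$, while the remaining nonlinear and nonlocal terms are absorbed by Kato-Ponce type fractional Leibniz estimates, again against the $C^1$ norm. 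Since $u$ has mean zero, $\lVert u\rVert_{L^\infty}\le C''\lVert u_\theta\rVert_{L^\infty}$, so the hypothesis controls the full $C^1$ norm: $\lVert u(t)\rVert_{C^1}\le C'(1+t)$.

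With the inequality in hand the conclusion is immediate. Integrating and inserting the hypothesis gives
$$ \lVert u(t)\rVert_{H^q}^2 \le \lVert u_0\rVert_{H^q}^2\,\exp\!\left(\int_0^t K\big(1+C'(1+\tau)\big)\,d\tau\right), $$
whose exponent is $O(t^2)$ and hence finite for every finite $t$. Taking $q=s$ we get $\sup_{t<T}\lVert u(t)\rVert_{H^s}<\infty$ when $T<\infty$, contradicting the breakdown alternative; therefore $T=\infty$. Running the same estimate at every level $q$ then propagates all Sobolev norms, so that $C^\infty$ data stays $C^\infty$, which is the statement used in the proof of Theorem \ref{mainthm2}.

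I expect the main obstacle to be the derivation of the energy inequality with a right-hand side depending only on the $C^1$ norm and not on higher derivatives. The difficulty is twofold: the operator $\Lambda=-H(\partial_\theta^3+\partial_\theta)$ is simultaneously third-order and nonlocal, so the commutators $[\partial_\theta^q,u\,\cdot\,]$ and the Hilbert-transform commutators must be handled by fractional calculus rather than the elementary Leibniz rule, and one must exhibit the exact cancellation that removes the apparent loss of one derivative in the transport term. As this is established in \cite{EK2014b}, I would invoke it directly; reconstructing it from scratch would constitute the bulk of the work.
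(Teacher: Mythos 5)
Your proposal is correct and follows essentially the same route as the paper, which itself does not write out a proof but attributes the result to exactly the two ingredients you invoke: the no-loss/no-gain Lemma 4.1 of \cite{EK2014a} and the $C^1$-controls-$H^q$ Gronwall estimate of Theorem 5.1 of \cite{EK2014b}, combined through the standard continuation argument for the smooth geodesic spray. Your fleshing out of how these combine (linear-in-$t$ hypothesis $\Rightarrow$ exponent of order $t^2$ $\Rightarrow$ finite $H^s$ norm on finite intervals $\Rightarrow$ no breakdown) is exactly the intended reading.
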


Hence all we need to do is obtain a uniform bound for the $C^1$ norm of $u$. Since the $\dot{H}^{3/2}$ seminorm of a solution of \eqref{EWP} is constant by energy conservation, it is sufficient to bound the $C^1$ norm in terms of the $\dot{H}^{3/2}$ seminorm. Note that the $H^{3/2}(S^1)$ norm does not in general control the $C^1(S^1)$ norm of an arbitrary function $f$ on $S^1$; we need to use the special structure of the equation \eqref{EWP} to get this.

\begin{proof}[Proof of Theorem \ref{mainthm2}]
Proposition \ref{EWPrewrite} shows that
$$ \frac{d}{dt} \lVert u_{\theta}\rVert_{L^{\infty}} \le \lVert F\rVert_{L^{\infty}} + \lVert G\rVert_{L^{\infty}}.$$
Using Theorem \ref{mysteryboundthm}, we obtain $\lVert F\rVert_{L^{\infty}} \le C \lVert u_{\theta}^2\rVert_{L^2} + C E_0$ where
$E_0 = \lVert u\rVert^2_{\dot{H}^{3/2}}$, which is constant in time since $u$ is an Euler-Arnold equation. Similarly Corollary \ref{Gboundcor} yields $\lVert G\rVert_{L^{\infty}} \le C \lVert u_{\theta}\rVert^2_{\dot{H}^{1/2}} + C E_0$. Since $u$ is always chosen as the representative of the equivalence class that has $c_1=c_{-1}=0$ (i.e., its Fourier coefficients are only nonzero for $\lvert n\rvert \ge 2$), we can easily bound both of these lower-order terms above by some constant multiple of $E_0$.

It follows that $\frac{d}{dt} \lVert u_{\theta}\rVert_{L^{\infty}} \le C E_0$, so that $\lVert \partial_{\theta}u(t)\rVert_{L^{\infty}} \le \lVert \partial_{\theta} u_0\rVert_{L^{\infty}} + C E_0 t$, which shows that $u_{\theta}$ cannot approach infinity in finite time. This proves that the solution of the EWP equation \eqref{EWP} remains in any $H^s$ space that $u_0$ begins in for any $s>\frac{3}{2}$, using Theorem \ref{C1boundthm}.
\end{proof}

\subsection{Conformal Welding on $T(1)$ and $\mathcal{T}(1)$}
In Teichm\"uller theory there is a natural identification between diffeomorphisms of the unit circle and curves in the plane, up to certain normalizations, given by conformal welding. Here we use numerical simulations to map the geodesics  corresponding to the Wunsch and EWP equations to their respective spaces of curves in $\mathbb{C}$, which then correspond to spaces of appropriately normalized conformal maps of the unit disk. It would be interesting to use this alternative representation of geodesics to prove the results of Theorem \ref{mainthm1} directly, since it could generate
a new approach to other Euler-Arnold equations with similar geometric properties
for which these results are not known, such as the surface quasigeostrophic
equation~\cite{Was2015} and the 3D Euler equation~\cite{PS2015}. We really have two different situations where we may apply conformal welding: for the universal Teichm\"{u}ller space $T(1)$, and for the universal Teichm\"{u}ller curve $\mathcal{T}(1)$.

\subsubsection{The universal Teichm\"uller space $T(1)$}
As discussed in Sharon and Mumford \cite{SM2006}, for the universal Teichm\"{u}ller space $T(1)$, conformal welding may be thought of as a map
$$\Gamma:\mathcal{C}\rightarrow \Diff(S^1)/\PSL_2(\mathbb{R}),$$
where $\mathcal{C}$ consists of equivalence classes of smooth, closed, simple curves in $\mathbb{C}$ modulo scaling and translation operations. Given a representative $C$ of an equivalence class $[C]\in \mathcal{C}$, by the Riemann Mapping Theorem, there exists a univalent, holomorphic function $\Phi_-:\mathbb{D}\rightarrow \mathbb{C}$ such that $\partial \Phi_-(\mathbb{D})=C$. This function is unique up to precomposition by an element of $\PSL_2(\mathbb{R})$.  Then, we also find a (this time unique) holomorphic map $\Phi_+:\mathbb{D}^*\rightarrow \mathbb{C}$, where $\mathbb{D}^*$ is the exterior of the unit disk, such that $\partial \Phi_+(\mathbb{D}^*)=C$, $\Phi_+(\infty)=\infty$ and $\Phi_+'(\infty)>0$.

We then define
$$\Gamma: [C] \mapsto [\eta]_{\PSL_2(\mathbb{R})}\ni \eta=\Phi_+^{-1}\circ \Phi_-|_{S^1}$$
The fact that this map is bijective follows from the construction of its inverse, see Sharon and Mumford \cite{SM2006}, or Lehto \cite{Leh1987}. One may construct the inverse by solving a Fredholm integral equation of the second kind as in Feiszli and Mumford \cite{FM2007}. One must solve:
\begin{equation}\label{weldkern}
K(F)+F=e^{i\theta}
\end{equation}
where $F(\theta)=\Phi_+(e^{i\theta})$ and
$$K(F)(\theta)=\frac{i}{2}\int_{S^1}\left(\cot\left(\frac{\theta-\psi}{2}\right)-(\eta^{-1})'(\psi)\cot\left(\frac{\eta^{-1}(\theta)-\eta^{-1}(\psi)}{2}\right)\right)F(\psi)\,d\psi.$$
The results of Feiszli and Mumford \cite{FM2007} ensure that \eqref{weldkern} has a unique solution. Thus we have a practical way of computing
$$\Gamma^{-1}:\eta \mapsto F([0,2\pi)).$$

\subsubsection{The universal Teichm\"uller curve $\mathcal{T}(1)$}
Alternatively, as discussed in Kirillov \cite{Kir1987} and Teo \cite{Teo2004}, one may consider conformal welding for the universal Teichm\"{u}ller curve, $\mathcal{T}(1)$, which may be thought of as a map:
$$\tilde{\Gamma}:\tilde{\mathcal{C}}\rightarrow \Diff(S^1)/\Rot(S^1),$$
where $\tilde{\mathcal{C}}$ consists of smooth, closed, simple curves in $\mathbb{C}$ with conformal radius one at the origin. We then proceed in a similar fashion as before. This time, when we find a univalent, holomorphic $\Phi_-:\mathbb{D}\rightarrow \mathbb{C}$ such that $\partial \Phi_-(\mathbb{D})=C$, we also demand that $\Phi_-$ be the unique mapping such that $\Phi_-(0)=0$ and $\Phi_-'(0)>0$ (which is the normalization demanded by the Riemann Mapping Theorem). Since $C$ has conformal radius one, $\Phi_-'(0)=1$. We find $\Phi_+$ in the same way as before, and then have the same definition for $\tilde{\Gamma}$ as for $\Gamma$:
$$\tilde{\Gamma}:C\mapsto [\eta]_{\Rot(S^1)}\ni \eta = \Phi_+^{-1}\circ \Phi_-|_{S^1}$$
where the equivalence class is now taken to be in $\Diff(S^1)/\Rot(S^1)$ as opposed to $\Diff(S^1)/\PSL_2(\mathbb{R})$ in the case of $\Gamma$. That this map is bijective is proved in Kirillov \cite{Kir1987} for smooth curves and in Teo \cite{Teo2004} for the more general situation of quasicircles. As mentioned above, our goal was to use numerical simulations to take the Lagrangian trajectories corresponding to the Wunsch equation, and map them (at each time $t$) via $\tilde{\Gamma}^{-1}$, to $\tilde{\mathcal{C}}$. We employed the integral kernel method from Feiszli and Mumford \cite{FM2007}. At the moment, our $[\eta]_{\Rot(S^1)}$ will be mapped via this method to the correct curve in $\mathbb{C}$ up to scaling and translation.  The curve can be readily normalized so that it encloses 0 and has conformal radius 1 (i.e. so that it lies within $\tilde{\mathcal{C}}$). Of course, the issue is that there is a whole family of possible such normalized curves within an equivalence class in $\mathcal{C}$.\\
\\
This issue can be readily solved by observing the following. Suppose that $C_1, C_2\in \tilde{C}$ such that $a(C_1+v) = C_2$ for some $v\in \mathbb{C}$, and $a\in \mathbb{R}^+$. Let $f_1:\mathbb{D}\rightarrow \mathbb{C}$ and $f_2:\mathbb{D}\rightarrow \mathbb{C}$ be the corrseponding Riemann mappings with boundary values $C_1$ and $C_2$ respectively, with $f_i(0)=0$ and $f_i'(0)=1$. Then there is a unique element $\phi\in \PSL_2(\mathbb{R})$ such that $ a(f_1\circ \phi +v) = f_2$. This explains the differences in normalizations between the $T(1)$ and $\mathcal{T}(1)$ situations and now motivates the technique for our numerical simulation computing $\tilde{\Gamma}^{-1}$. First, we take our geodesic $\eta(t)$ corresponding to the Wunsch equation in Lagrangian coordinates. We employ the algorithm in Feiszli and Mumford \cite{FM2007} and solve \eqref{weldkern} to obtain the curve we want up to scaling and translation. We take any normalization (up to translation and scaling) of this curve so that it lies in $\tilde{C}$. We then find $\Phi_+$ and $\Phi_-$ and compute $\zeta= (\Phi_+)^{-1}\circ \Phi_-|_{S^1}$. By previous arguments we have $\zeta\in [\eta]_{\PSL_2(\mathbb{R})}$. Hence there exists a $\phi \in \PSL_2(\mathbb{R})/\Rot(S^1)$ such that $\zeta \circ \phi|_{S^1} =\eta$ and we compute $\phi|_{S^1}=\zeta^{-1}\circ \eta$. Then we extend $\phi|_{S^1}$ to $\mathbb{D}$ (simply by observing that we only need to know 3 values of $\phi$ to know $\phi$ itself), and then we compute $\phi(0)=(f_1)^{-1}(-v)$. This yields the correct curve in $\tilde{C}$.

\subsection{Numerical Simulations}

In this section we show the results of numerical simulations solving the Wunsch and Euler-Weil-Petersson equations as well as implementing the conformal welding process above.
%
%
%
%
%
%
\subsubsection{Solutions to EWP and Wunsch}
Here we implemented a Fourier-Galerkin method to get a system of ODES, coupled with a 4th order Runge-Kutta method to solve each ODE that arises. The following is a collection of solutions for the EWP and Wunsch equations with initial condition $u_0(x) = \sin(2x)$. For each equation we have $t_0=0$ and $t_{fin}=.5$.
\begin{table}[H]
\caption {Eulerian Solutions to Wunsch with $u_0=\sin(2x)+\frac{1}{2}\cos(3x)$. Note that the slopes approach $-\infty$; after this the numerical solution appears to become singular everywhere simultaneously. It is not clear if this is what actually happens.}
\begin{tabular}{ccc}
\includegraphics[height=1.6in]{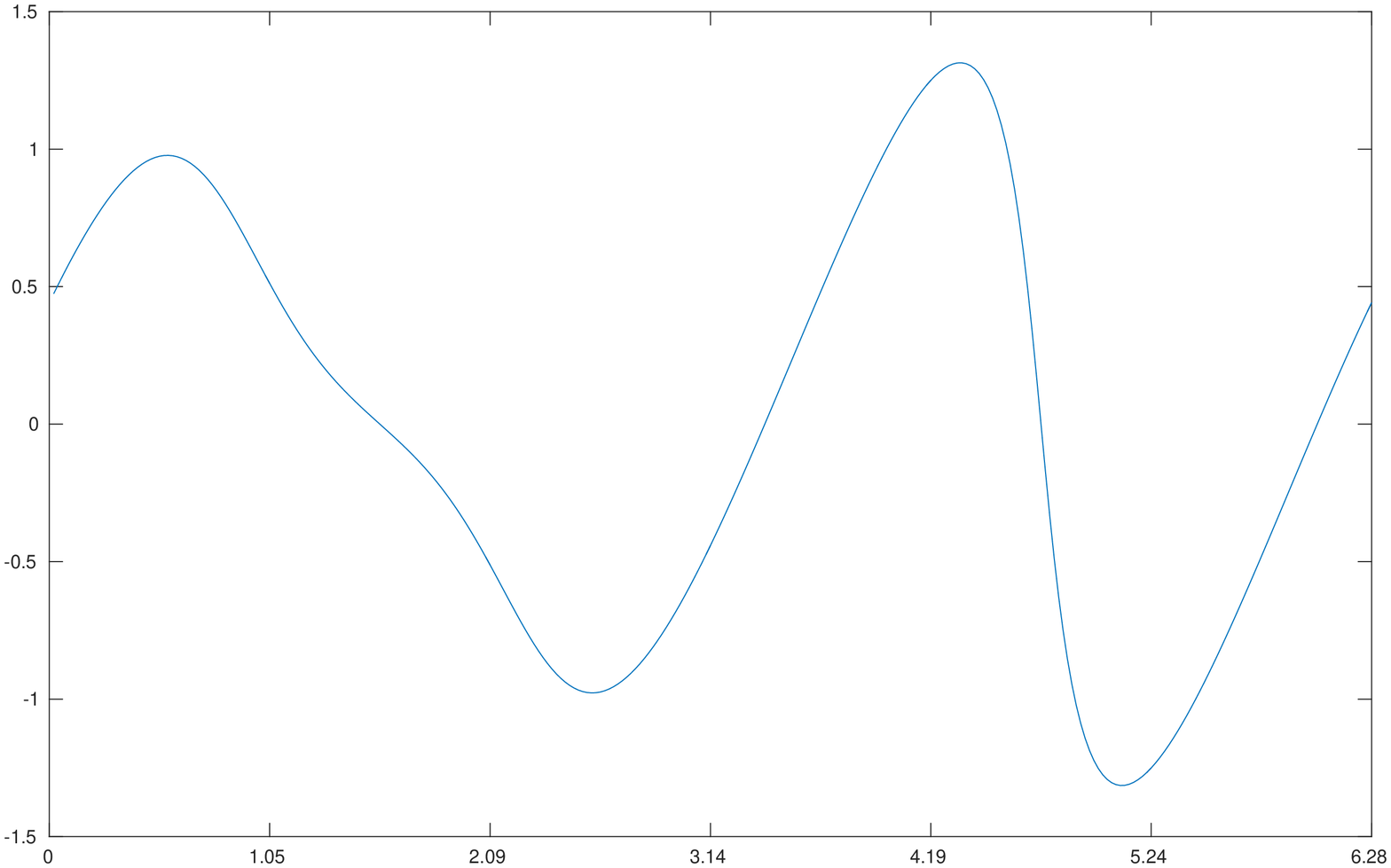} &
\includegraphics[height=1.6in]{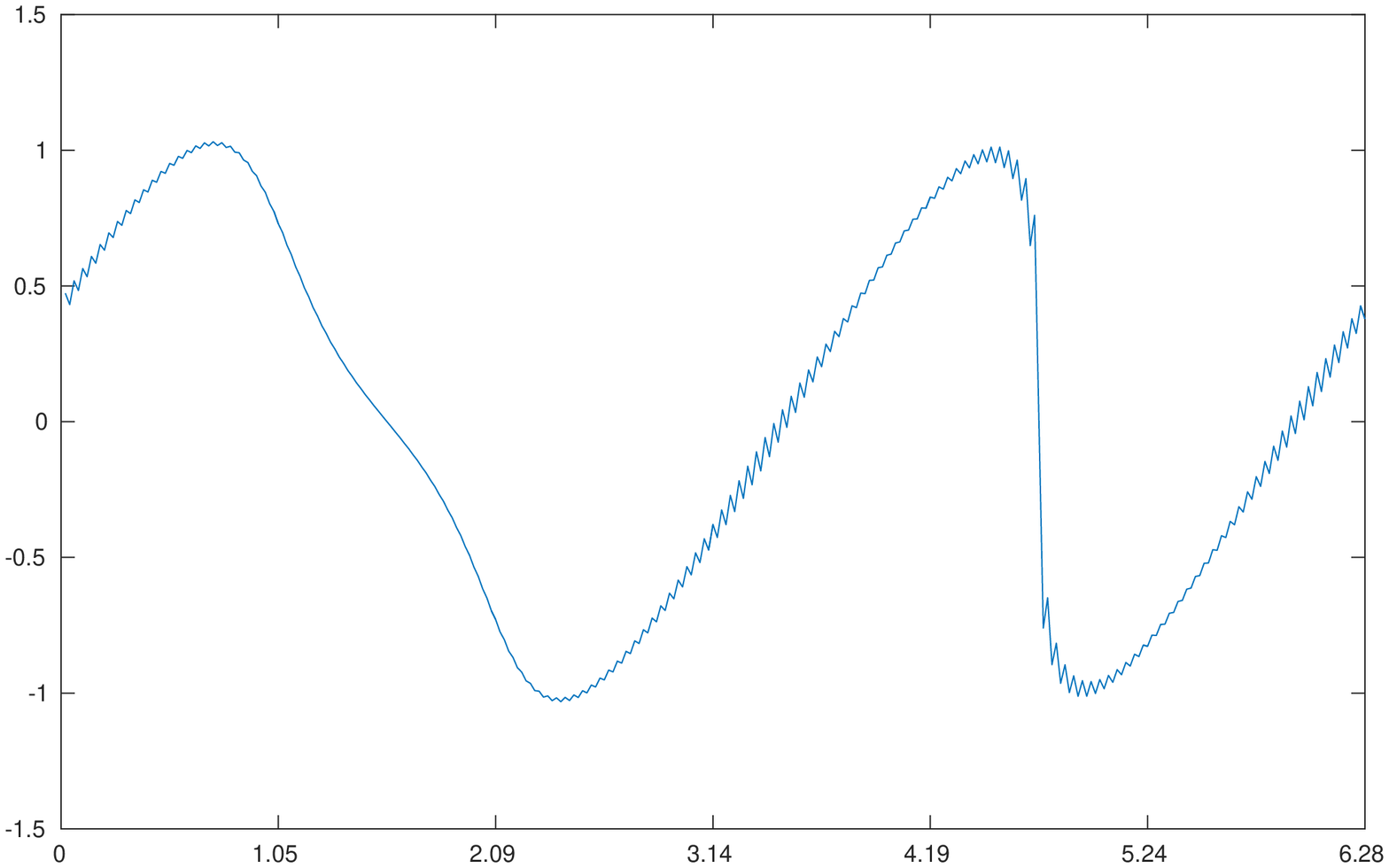} \\
t=.125 (before blowup) & t=.25 (after blowup) \\

\end{tabular}
\end{table}

\begin{table}[H]
\caption {Eulerian Solutions to EWP with $u_0=\sin(2x)+\frac{1}{2}\cos(3x)$. The profile steepens but does not become singular.}
\begin{tabular}{cc}
\includegraphics[height=1.6in]{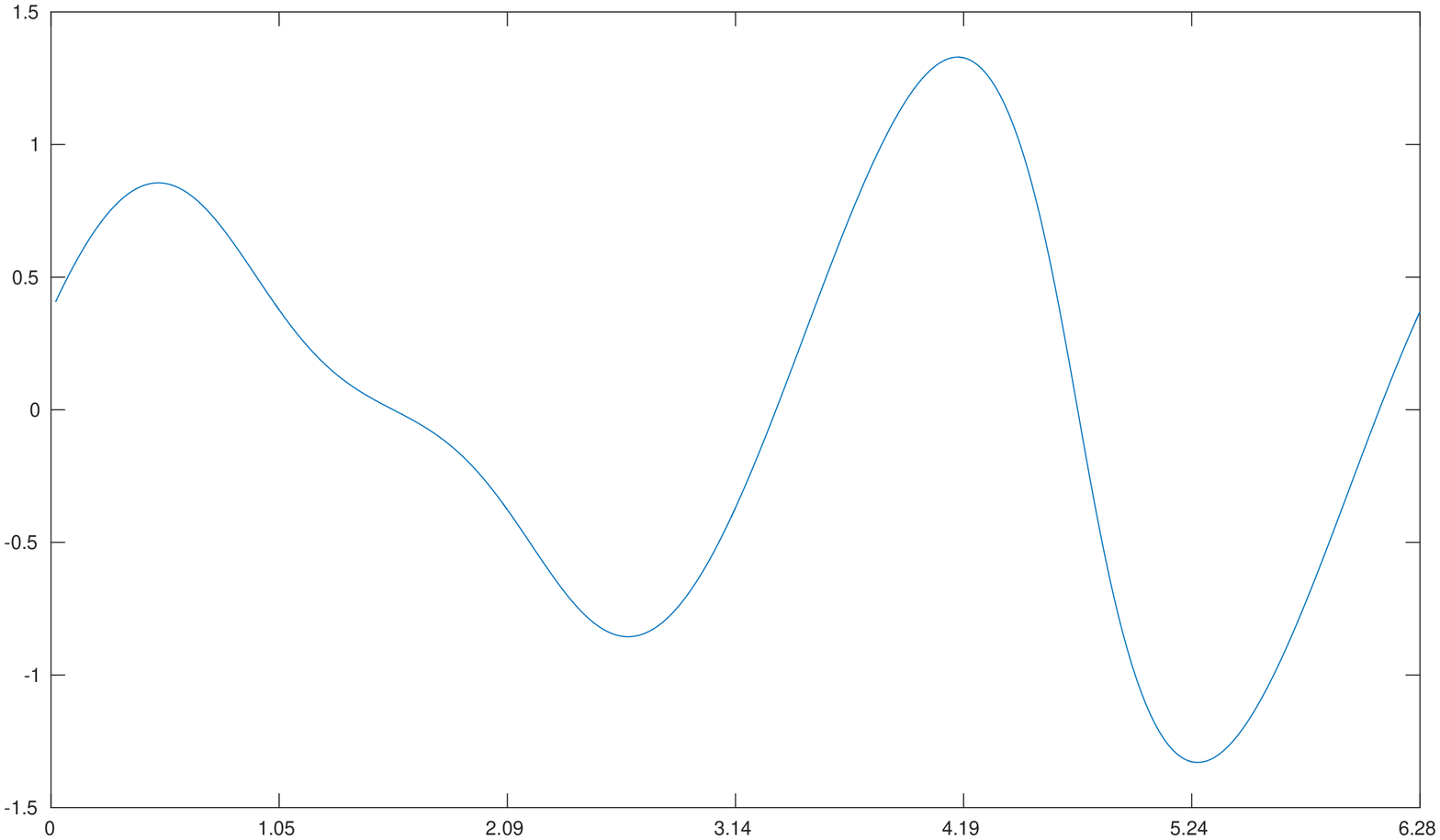} &
\includegraphics[height=1.6in]{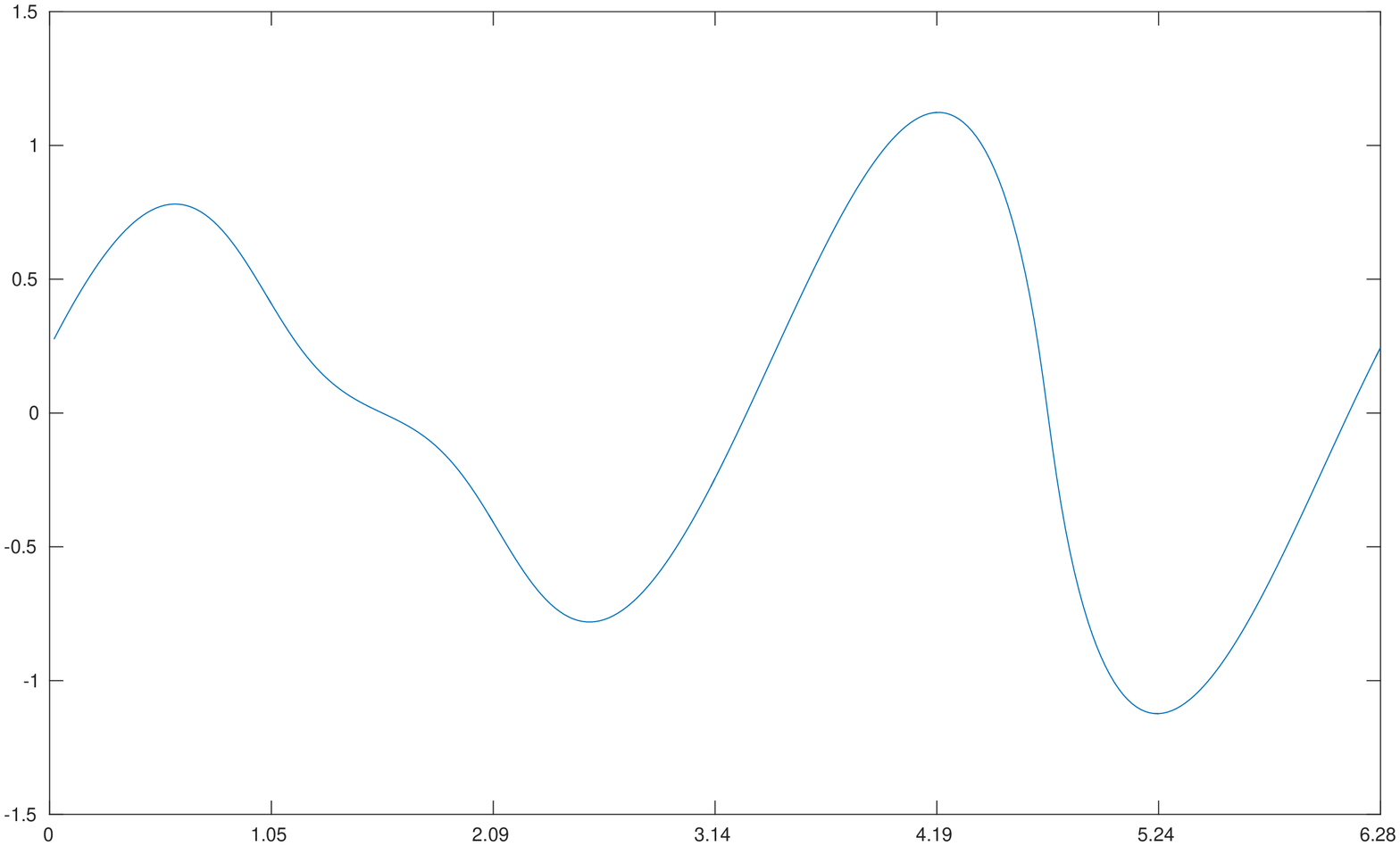} \\
t=.25  & t=.5\\

\end{tabular}
\end{table}

\begin{table}[H]
\caption {Lagrangian Solutions to Wunsch with $u_0=\sin(2x)+\frac{1}{2}\cos(3x)$. As $u_{\theta}$ approaches $-\infty$, the slope of $\eta$ approaches zero, and $\eta$ leaves the diffeomorphism group.}
\begin{tabular}{cc}
\includegraphics[height=1.6in]{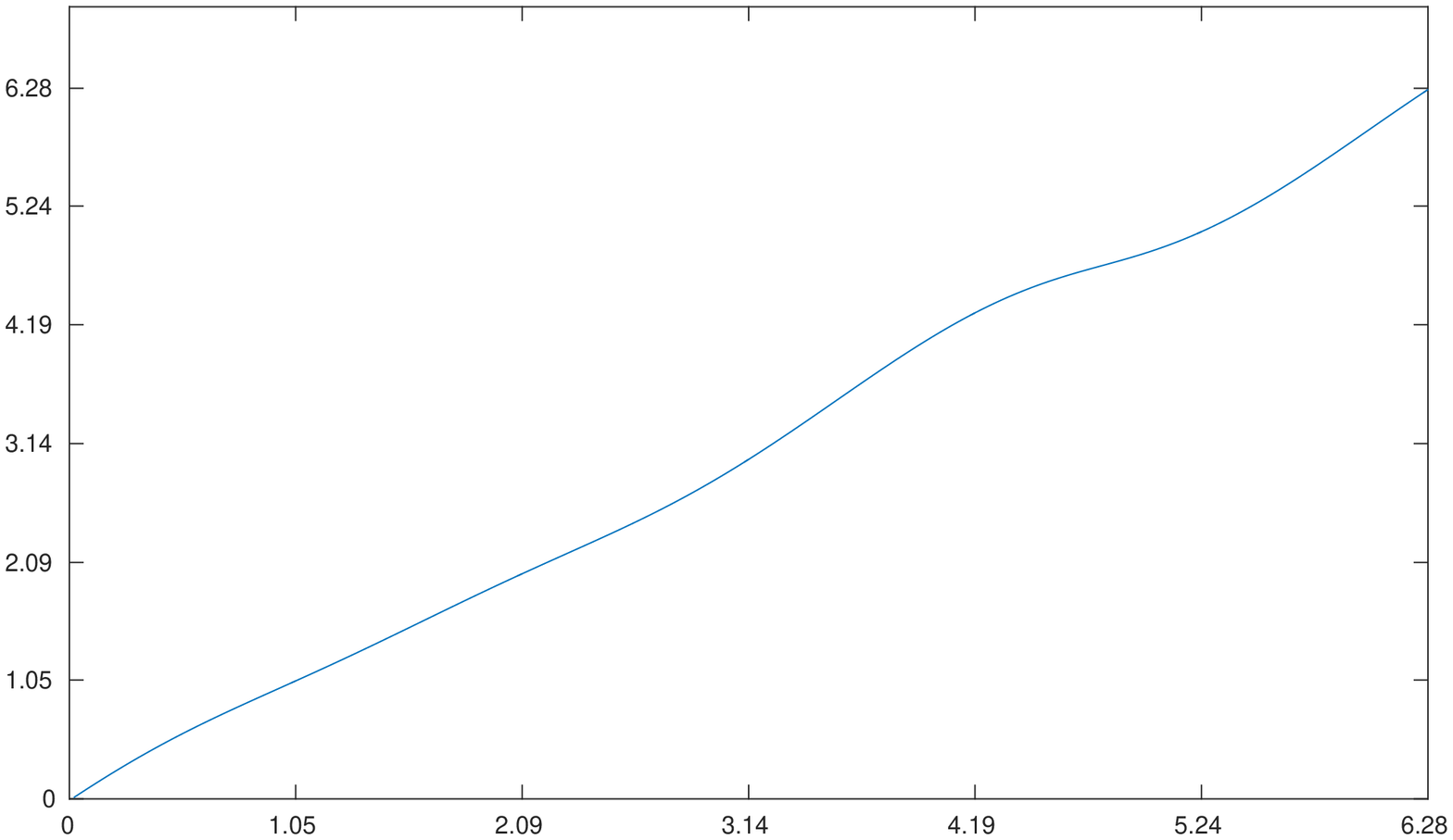} &
\includegraphics[height=1.6in]{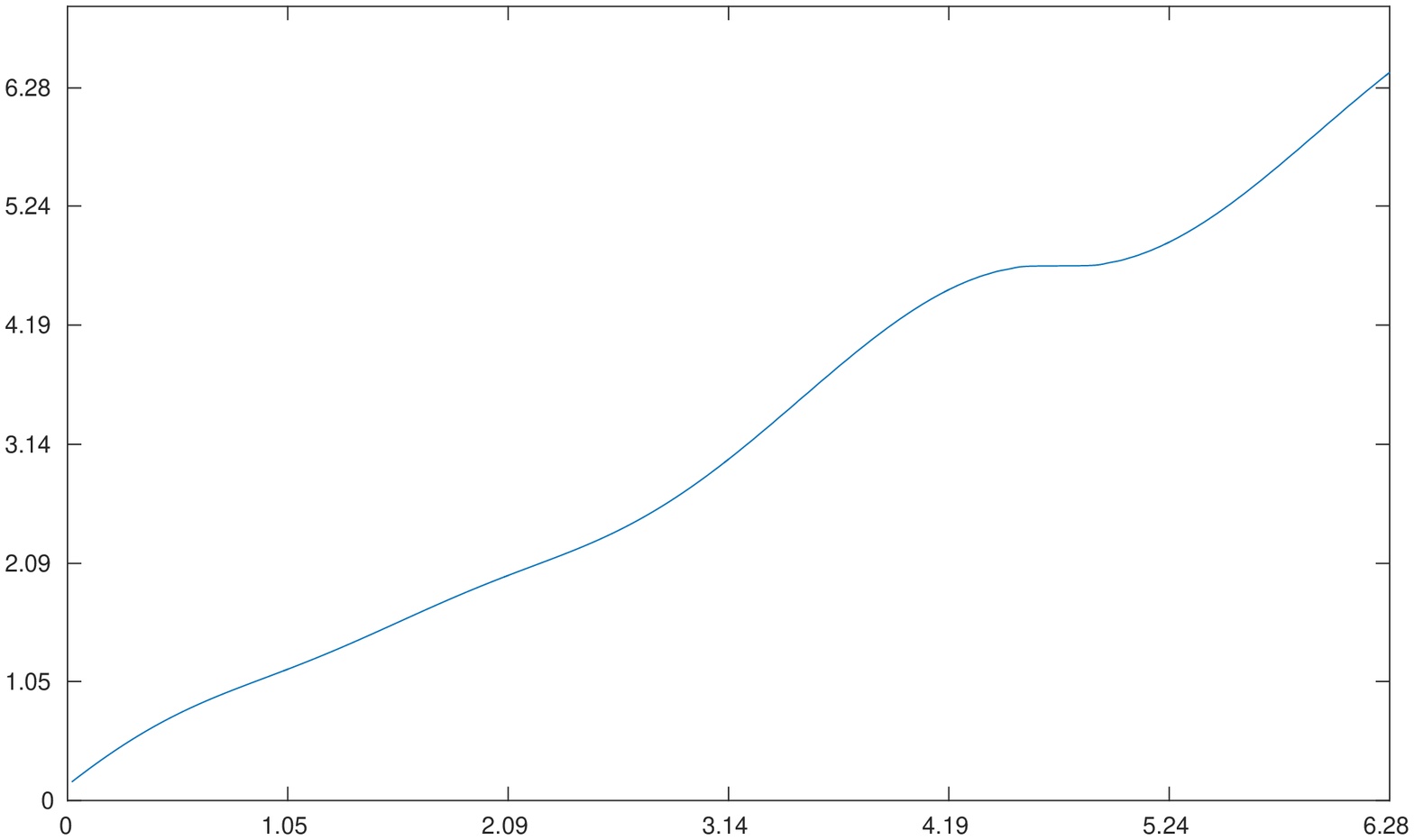} \\
t=.125 (before blowup)& t=.25 (after blowup)\\

\end{tabular}
\end{table}

\begin{table}[H]
\caption {Lagrangian Solutions to EWP with $u_0=\sin(2x)+\frac{1}{2}\cos(3x)$. It appears that $\eta$ is flattening substantially, but the slope still remains positive.}
\begin{tabular}{cc}
\includegraphics[height=1.6in]{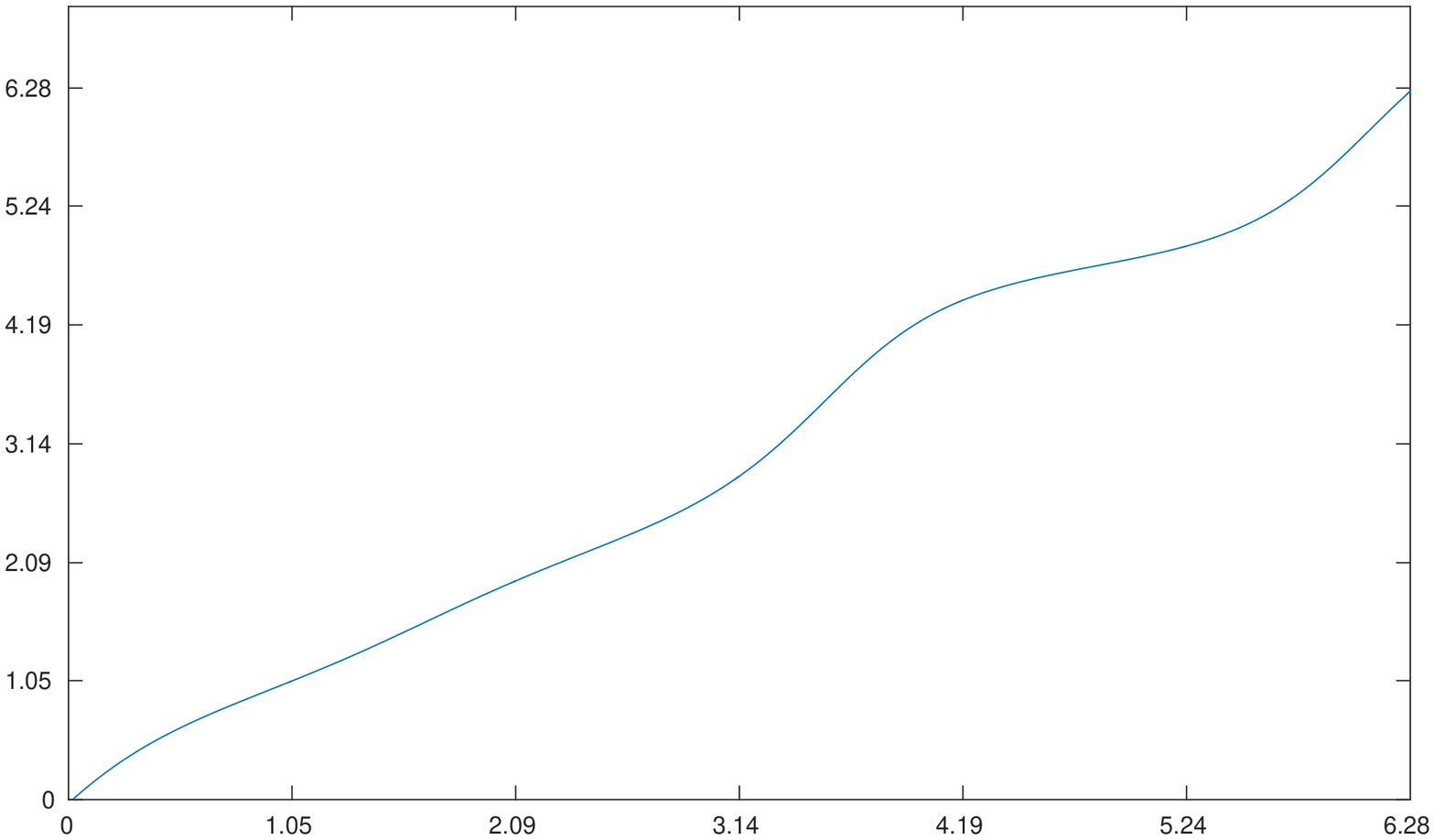} &
\includegraphics[height=1.6in]{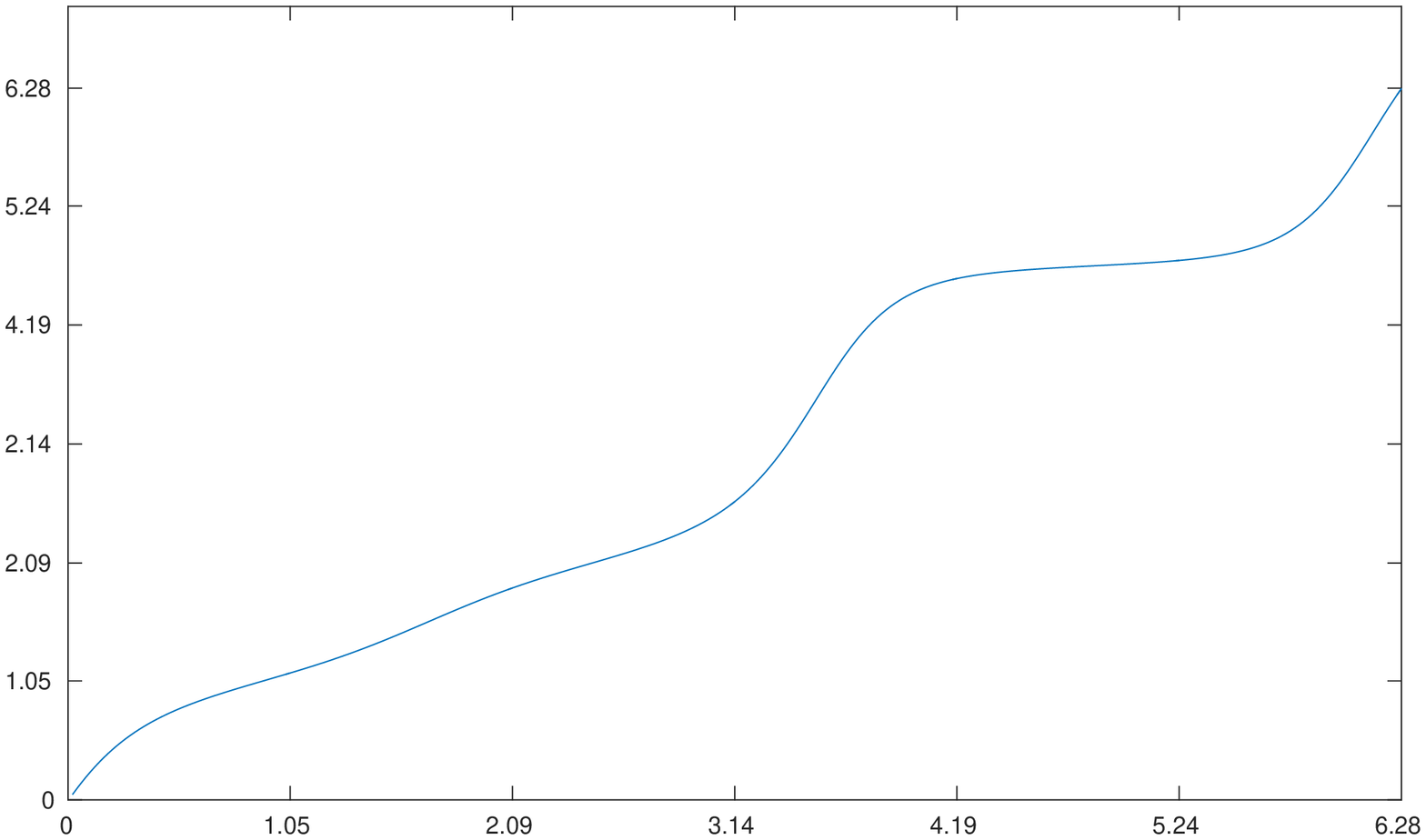} \\
t=.25  & t=.5 \\

\end{tabular}
\end{table}

\subsubsection{Conformal welding}
Here we used Mumford's MATLAB code~\cite{Mum2016}
to solve \eqref{weldkern}. For EWP solutions, this yields a representative of the corresponding equivalence class in $\mathcal{C}$. For solutions to the Wunsch equation, we proceed by normalizing using the technique discussed above. As in Sharon and Mumford \cite{SM2006}, we employed the Schwarz-Christoffel map, as implemented in Tobin Driscol's code available at the website \url{http://www.math.udel.edu/~driscoll/SC/index.html} to construct the Riemann mappings and hence diffeomorphisms associated to each curve. The first thing we note about the welding curves is that for the above Wunsch solutions, the translation and scaling operations as discussed above are essentially trivial. We believe that this has to do with the fact that the $\sin(x)$ Fourier mode in the above solution to the Wunsch equation is zero. This at least makes heuristic sense as the bulk of the translating and dilating should happen in the $\PSL_2(\mathbb{R})$ fibre, which in this case is zero. One can obtain small amounts of shifting and translating by making sure that the initial data has either $\sin(x)$ or $\cos(x)$ terms. Second, we note the similarity in the shapes of the solutions. Again, this is perhaps not surprising as the $H^{3/2}$ metric is obtained by averaging over the fibres of the $H^{1/2}$ metric as in Teo \cite{Teo2004}.
\begin{table}[H]
\caption {Trajectories in $\tilde{\mathcal{C}}$ after welding solutions to the Wunsch equation with $u_0=\sin(2x)+\frac{1}{2}\cos(3x)$. At the blowup time it appears that the boundary curve stops being $C^1$, though not as expected due to an easily visible kink at a point. Instead the curve appears to spontaneously generate two kinks that spread apart after the blowup time.}
\begin{tabular}{cc}
\includegraphics[height=1.6in]{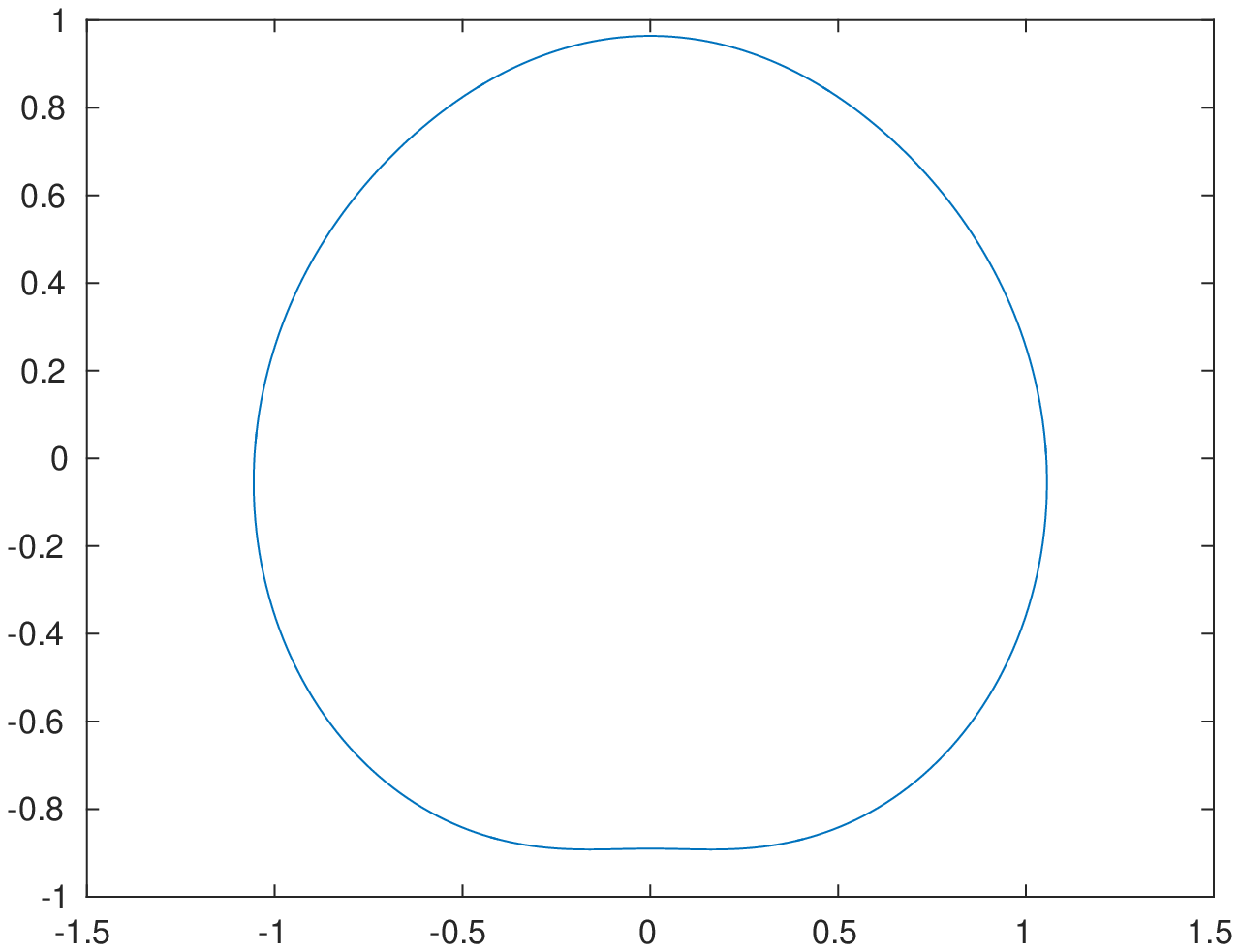} &
\includegraphics[height=1.6in]{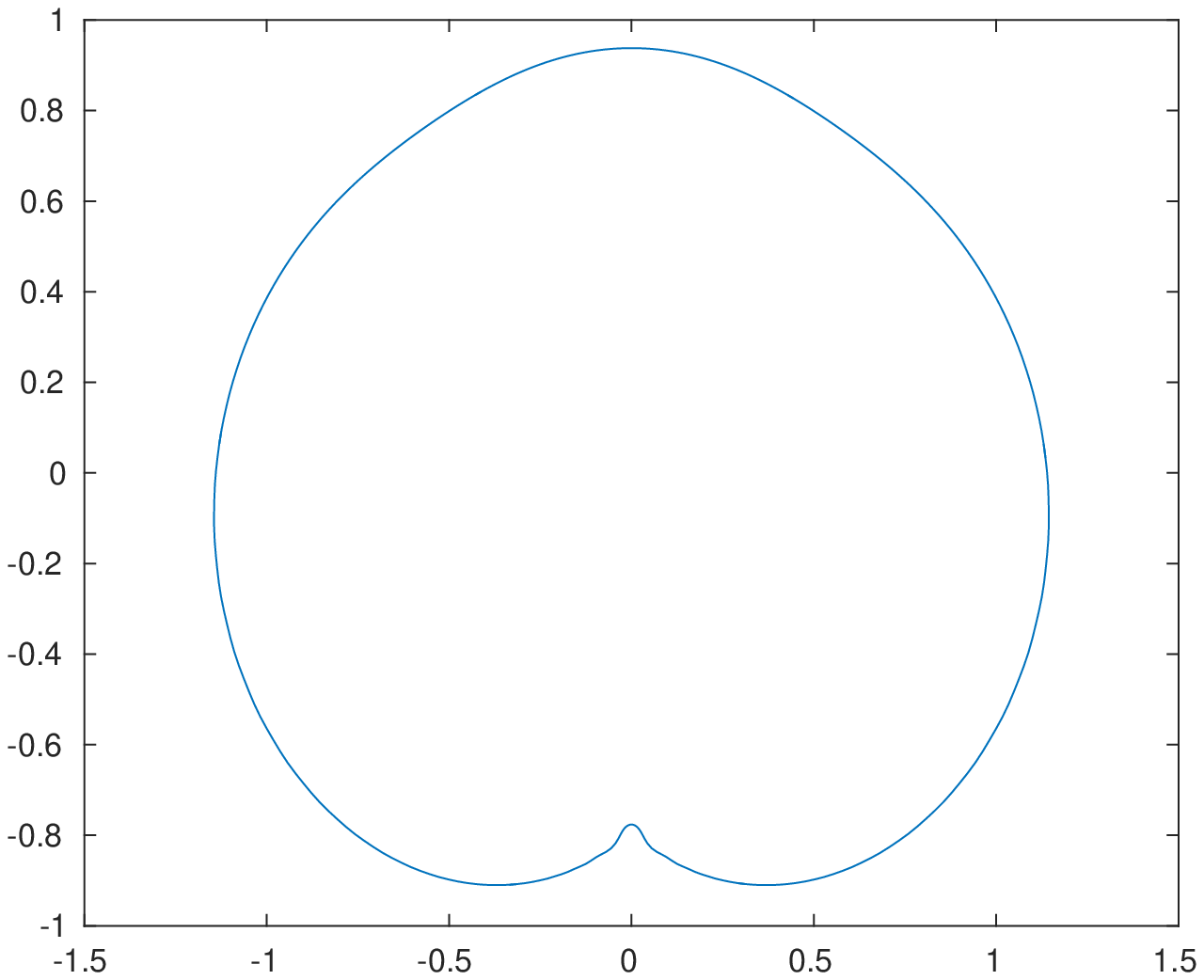} \\
t=.125  &  t=.25 (after blowup)\\

\end{tabular}
\end{table}
\begin{table}[H]
\caption {Trajectories in $\mathcal{C}$ after welding solutions to EWP with $u_0=\sin(2x)+\frac{1}{2}\cos(3x)$. The change in concavity seems to remain smooth, with continuous tangents.}
\begin{tabular}{cc}
\includegraphics[height=1.6in]{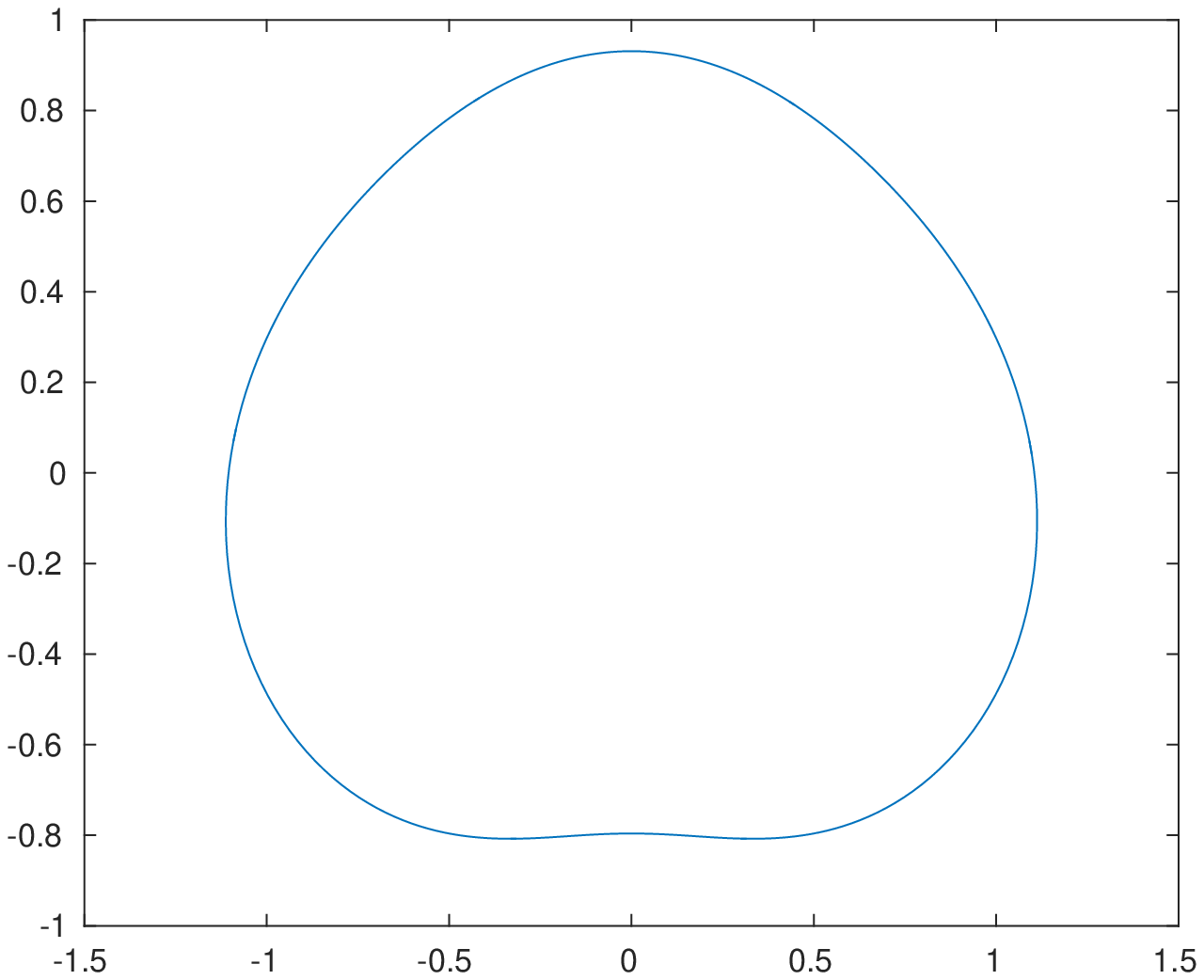} &
\includegraphics[height=1.6in]{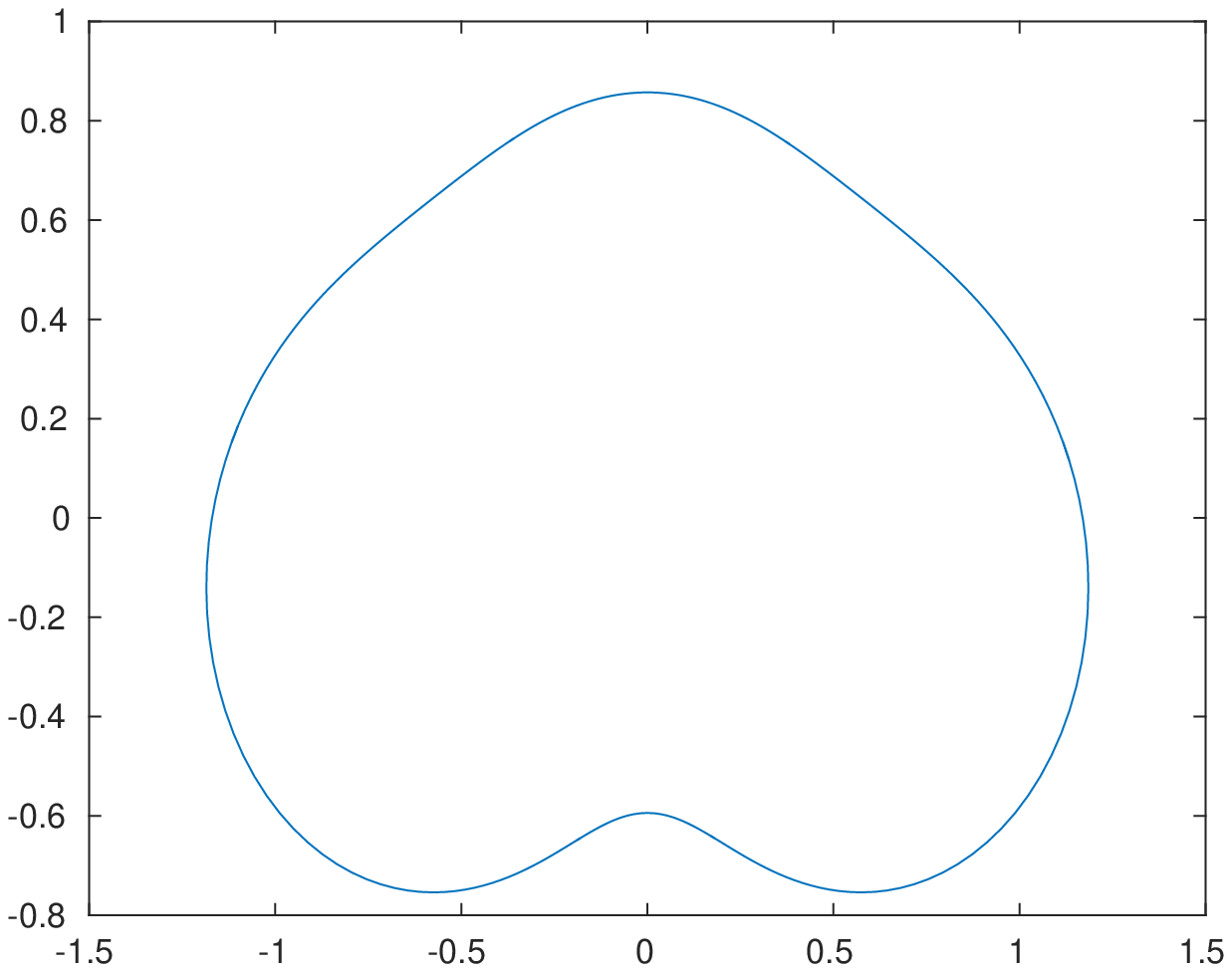} \\
t=.25  & t=.5 \\

\end{tabular}
\end{table}

%
%
%
%


\begin{thebibliography}{10}
%

\bibitem{AK1998}
V.~I. Arnold and B.~Khesin.
\newblock {\em {T}opological {M}ethods in {H}ydrodynamics}, volume 125 of {\em
  Applied Mathematical Sciences}.
\newblock Springer-Verlag, New York, 1998.
%
%
%
%
\bibitem{BKP2015}
M.~Bauer, B.~Kolev, and S. C.~Preston.
\newblock {G}eometric investigations of a vorticity model equation.
\newblock {\em J. Differential Equations}, 260(1):478--516, 2016.

%
%
%
%



\bibitem{CC2010}
A.~Castro and D.~C\'{o}rdoba.
\newblock {I}nfinite energy solutions of the surface quasi-geostrophic
  equation.
\newblock {\em Adv. Math.}, 225(4):1820--1829, 2010.

%
\bibitem{CLM1985}
P.~Constantin, P.~D. Lax, and A.~Majda.
\newblock {A} simple one-dimensional model for the three-dimensional vorticity
  equation.
\newblock {\em Comm. Pure Appl. Math.}, 38(6):715--724, 1985.
%
%





\bibitem{EM1970}
D.~G. Ebin and J.~E. Marsden.
\newblock {G}roups of diffeomorphisms and the motion of an incompressible
  fluid.
\newblock {\em Ann. Math. (2)}, 92:102--163, 1970.

%
%

\bibitem{EK2014a}
J.~Escher and B.~Kolev.
\newblock {R}ight-invariant {S}obolev metrics of fractional order on the
 diffeomorphism group of the circle.
\newblock {\em J. Geom. Mech.}, 6(3):335--372, 2014.

\bibitem{EK2014b}
  J.~Escher and B.~Kolev.
  \newblock {G}eodesic completeness for {S}obolev {$H\sp s$}-metrics on the
  diffeomorphism group of the circle.
  \newblock {\em J. Evol. Equ.}, 14(4-5):949--968, 2014.

\bibitem{EKW2012}
J.~Escher, B.~Kolev, and M.~Wunsch.
\newblock {T}he geometry of a vorticity model equation.
\newblock {\em Commun. Pure Appl. Anal.}, 11(4):1407--1419, Jul 2012.

\bibitem{FM2007}
M.~Feiszli and D.~Mumford.
\newblock {S}hape representation via conformal mapping.
\newblock {\em Proc. SPIE 6498, Computational Imaging}, vol. 6498, Feb 2007.

%

\bibitem{GBR2015}
F.~Gay-Balmaz and T.~Ratiu.
\newblock The geometry of the universal Teichm\"uller space and the Euler-Weil-Petersson equation.
\newblock {\em Adv. Math.}, 279:717--778, 2015.
%
%
%
\bibitem{HLP1934}
G.~H.~Hardy, J.~E.~Littlewood, and G.~P\'olya.
\newblock {\em Inequalities}.
\newblock Cambridge University Press, London, 1934.
%

\bibitem{KM2003}
B. Khesin and G. Misio\l ek.
\newblock {E}uler equations on homogeneous spaces and {V}irasoro orbits.
\newblock {\em Adv. Math.}, 176:116--144, 2003.
%



\bibitem{KY1987}
A.~A.~Kirillov and D.~V.~Yuriev.
\newblock {K}\"ahler geometry of the infinite-dimensional homogeneous space $M=\Diff_+(S^1)/\Rot(S^1)$.
\newblock {\em Functional Anal. Appl.}, 16:114--126, 1987.

\bibitem{Kir1987}
A.~A.~Kirillov.
\newblock {K}\"ahler structure on the $K$-orbits of a group of diffeomorphisms of the circle.
\newblock {\em Functional Anal. Appl.}, 21(2):122--125, 1987.

%

\bibitem{Leh1987}
O.~Lehto.
\newblock Univalent functions and Teichm\"{u}ller spaces.
\newblock vol. 109. Springer-Verlag, New York, 1987.

\bibitem{Len2007}
J.~Lenells.
\newblock {T}he {H}unter-{S}axton equation describes the geodesic flow on a
  sphere.
\newblock {\em J. Geom. Phys.}, 57(10):2049--2064, 2007.


\bibitem{McK2003}
H.~P. McKean.
\newblock {F}redholm determinants and the {C}amassa-{H}olm hierarchy.
\newblock {\em Comm. Pure Appl. Math.}, 56(5):638--680, 2003.


\bibitem{Mis1998}
G.~Misio{\l}ek.
\newblock {A} shallow water equation as a geodesic flow on the {B}ott-{V}irasoro group.
\newblock {\em J. Geom. Phys.} 24:203--208, 1998.

\bibitem{MP2010}
G.~Misio{\l}ek and S.~C. Preston.
\newblock {F}redholm properties of {R}iemannian exponential maps on
  diffeomorphism groups.
\newblock {\em Invent. Math.}, 179(1):191--227, 2010.


\bibitem{Mum2016}
D. Mumford, personal communication.

%
%
%
%



\bibitem{PS2015}
S.~C. Preston and A.~Sarria.
\newblock {L}agrangian aspects of the axisymmetric {E}uler equation.
\newblock To appear in {\em Nonlinearity}.
\newblock {\em arXiv:1505.05569}, 2015.

%
\bibitem{SM2006}
E.~Sharon and D.~Mumford.
\newblock 2D-Shape Analysis Using Conformal Mapping
\newblock {\em Int. J. Comput. Vision}, 70(1):55--75, Oct. 2006.

%
\bibitem{TT2006}
L. A.~Takhtajan and L.-P.~Teo.
\newblock {\em {W}eil-{P}etersson {M}etric on the {U}niversal {T}eichm\"uller {S}pace}, no. 861 of {\em Memoirs of the American Mathematical Society}.
\newblock American Mathematical Society, Providence, RI, 2006.

\bibitem{Teo2004}
L.-P.~Teo.
\newblock {V}elling-{K}irillov metric on the universal {T}eichm\"uller curve.
\newblock {\em J. Anal. Math.} 93(1):271--307, 2004.

\bibitem{Teo2007}
L.-P.~Teo.
\newblock {B}ers isomorphism on the universal {T}eichm\"uller curve.
\newblock {\em Math. Z.} 256:603--613, 2007.

%

\bibitem{Tro1992}
A.~J. Tromba.
\newblock {\em {T}eichm\"uller {T}heory in {R}iemannian {G}eometry.}
\newblock Springer, Basel, 1992.

\bibitem{Was2015}
P. Washabaugh.
\newblock {\em {T}he {S}{Q}{G} equation as a geodesic equation.}
\newblock To appear in Arch. Rational Mech. Anal.
\newblock http://arxiv.org/abs/1509.08034, 2015.


\bibitem{Wun2010}
M.~Wunsch.
\newblock {O}n the geodesic flow on the group of diffeomorphisms of the circle
  with a fractional {S}obolev right-invariant metric.
\newblock {\em J. Nonlinear Math. Phys.}, 17(1):7--11, 2010.


\bibitem{Yam2014}
S. Yamada,
\newblock {L}ocal and global aspects of {W}eil–{P}etersson geometry,
\newblock in {\em {H}andbook of {T}eichm\"uller {T}heory, Vol. IV}, 43--111, 2014.


\end{thebibliography}
\end{document}